\documentclass[12pt, reqno]{amsart}

\usepackage{amssymb,amsmath,amsthm,graphicx,enumitem,systeme, array,enumitem,commath,mathtools,multicol,url}
\usepackage{setspace,blindtext}
\usepackage{varioref}
\usepackage[colorlinks=true,linkcolor=blue]{hyperref}
\usepackage[capitalise]{cleveref}
\usepackage{blindtext}
\usepackage{fancyhdr}
\usepackage{pgfplots}
\usepackage{geometry}
\usepackage{xcolor}
\usepackage{mathabx}

\usepackage{caption}
\allowdisplaybreaks 

\usepackage{aliascnt}
\usepackage{varioref}
\usepackage[colorlinks=true,linkcolor=blue]{hyperref}
\usepackage[capitalise]{cleveref} 

\theoremstyle{plain}

\newtheorem{theorem}{Theorem}[section]

\newaliascnt{lemma}{theorem}
\newtheorem{lemma}[lemma]{Lemma}
\aliascntresetthe{lemma}

\newaliascnt{corollary}{theorem}
\newtheorem{corollary}[corollary]{Corollary}
\aliascntresetthe{corollary}

\newaliascnt{proposition}{theorem}

\aliascntresetthe{proposition}

\theoremstyle{definition}

\newaliascnt{definition}{theorem}

\aliascntresetthe{definition}

\newaliascnt{example}{theorem}

\aliascntresetthe{example}

\newaliascnt{remark}{theorem}
\newtheorem{remark}[remark]{Remark}
\aliascntresetthe{remark}

\newaliascnt{conjecture}{theorem}

\aliascntresetthe{conjecture}

\title[Area between sines and cosines]{Asymptotic areas between powers of \\sine and cosine curves}

\author{Atiratch Laoharenoo}
\address{Department of Mathematics and Computer Science, Kamnoetvidya Science Academy, Rayong 21210, Thailand}
\email{atiratch.l@kvis.ac.th}

\author{Chanatip Sujsuntinukul}
\address{Department of Mathematics, The University of Hong Kong, Pokfulam, Hong Kong}
\email{chanatip@connect.hku.hk}

\subjclass[2020]{26A06, 26A42}

\keywords{Integration, sine function, cosine function, integer sequence, binomial coefficient}

\begin{document}

\maketitle

\begin{abstract}
Motivated by Dombrowski and Dresden's work in 2025, we find the exact values of the limits
\[\lim_{k\to\infty}\int_0^{\rho\pi}|\sin^n(kx)-\sin^nx|dx\quad\text{and}\quad \lim_{k\to\infty}\int_0^{\rho\pi}|\cos^n(kx)-\cos^nx|dx\]
for $k,n\in\mathbb{N}$ and $\rho\ge 0$. In addition, we provide several simple recursive formulas which relate these integrals together. The key technique is to locate the zeros of the integrands explicitly, which allows removal of the absolute value and reduces the problem to evaluating limits of telescoping trigonometric sums via asymptotic analysis. 
\end{abstract}

\section{Introduction and main results}

The study of integrals involving trigonometric functions has long held significance in mathematical analysis, with numerous techniques developed for their evaluation. For instance, see \cite{DB}, \cite{LK}, \cite{AL}, \cite{Sofo}. Our main interest lies in the recent work by Dombrowski and Dresden in
 2025 \cite{MG}. They explicitly found the area between the functions $\cos^nx$ and $\cos^n(kx)$ over $[0, \pi]$ as $k\to\infty$, where $k,n\in\mathbb{N}$. To be more precise, they evaluated the definite integral
\begin{align*}\label{one}
    \mathcal{A}_n:=\lim_{k\to\infty}\int_0^{\pi}|\cos^nx-\cos^n(kx)|dx.
\end{align*}
When $n$ is odd, they found that
\[\mathcal{A}_n=\frac{1}{2^{n-4}\pi}\sum_{m=0}^{(n-1)/2}\binom{n}{m}\frac{1}{(n-2m)^2}.\]
On the other hand, when \(n\) is even, the value of \(\mathcal{A}_n\) depends on the residue of \(n\) modulo \(4\), and there are two possible cases as follows.
\begin{align*}
    \mathcal{A}_n=\begin{cases}
    \displaystyle\frac{1}{2^{n-4}\pi}\sum_{m=0}^{(n-2)/4}\binom{n}{2m}\frac{1}{(n/2-2m)^2} &\text{if } n\equiv 2\pmod{4},\\\\
    \displaystyle\frac{1}{2^{n-4}\pi}\sum_{m=0}^{(n-4)/4}\binom{n}{2m+1}\frac{1}{(n/2-(2m+1))^2} &\text{if } n\equiv 0 \pmod{4}.
    \end{cases}
\end{align*}
Below, we state some values of $\mathcal{A}_n$ when $n$ are small.
\begin{align*}
    \mathcal{A}_1=\frac{8\cdot 1}{(1)^2\pi}, \quad \mathcal{A}_2=\frac{16\cdot 1}{(2)^2\pi}, \quad \mathcal{A}_3=\frac{8\cdot 7}{(1\cdot 3)^2\pi}, \quad \mathcal{A}_4=\frac{16\cdot 16}{(2\cdot 4)^2\pi}.
\end{align*}
These values surprisingly turn out to have pleasant analytical and algebraic properties. For instance, they are somewhat closely related to the exponential generating functions of $(\arcsin x)/(1-x)$ and $(\arcsin x)^2/(2(1-x))$. 

In addition to these, they established a simple recursive formula for $\mathcal{A}_n$ as shown below.
\begin{align*}
    \mathcal{A}_n=\left(\frac{n-1}{n}\right)\mathcal{A}_{n-2}+\begin{cases}
        \displaystyle\frac{8}{n^2\pi} &\text{if $n$ is odd},\\
        \\
        \displaystyle\frac{16}{n^2\pi} &\text{if $n$ is even}.
    \end{cases}
\end{align*}

Motivated by their work, it is natural to evaluate \(\mathcal{A}_n\) in more general settings. For instance, instead of integrating over \([0,\pi]\), can we integrate the function over \([0,\rho\pi]\) where \(\rho \ge 0\)? Furthermore, can we replace the cosine functions appearing in the integrand of \(\mathcal{A}_n\) by sine functions? To this end, we introduce the following notations.
For $n, k\in\mathbb{N}$ and $\rho\in\mathbb{R}_{\ge 0}$, we set
\begin{align*}
    s_{n, k}(x):=\sin^n(kx)-\sin^nx\quad \text{and}\quad
    c_{n, k}(x):=\cos^n(kx)-\cos^nx.
\end{align*}
Consequently, we define four sequences $\{I_{n, k, \rho}\}$, $\{R_{n, k, \rho}\}$, $\{J_{n, k, \rho}\}$, $\{T_{n, k, \rho}\}$ over $k$ by
\begin{align*}
    I_{n, k, \rho}:=\int_0^{\rho\pi}|s_{n, k}(x)|dx,\quad
    R_{n, k, \rho}:=\int_{\frac{\pi}{2}}^{\rho\pi}|s_{n, k}(x)|dx,\\
     J_{n, k, \rho}:=\int_0^{\rho\pi}|c_{n, k}(x)|dx,\quad
    T_{n, k, \rho}:=\int_{\frac{\pi}{2}}^{\rho\pi}|c_{n, k}(x)|dx.
\end{align*}
(For $R_{n, k, \rho}$ and $T_{n, k, \rho}$, we further assume $\rho\ge 1/2$.)
Finally, we let
\begin{align*}
    I_{n, \rho}:=\lim_{k\to\infty}I_{n, k, \rho},\quad R_{n, \rho}:=\lim_{k\to\infty}R_{n, k, \rho},\quad
    J_{n, \rho}:=\lim_{k\to\infty}J_{n, k, \rho},\quad T_{n, \rho}:=\lim_{k\to\infty}T_{n, k, \rho}.
\end{align*}
In this work, we successfully compute the four limits above. Moreover, we are able to provide some recursive formulas relating to these limits. Below are our main results. Their proofs will be presented in later sections.

\subsection{Explicit values of $I_{n, \rho}$, $J_{n, \rho}$ for $\rho\in[0, 1/2]$}

\begin{theorem}\label{thm1.1}
    Let $\rho\in[0, 1/2]$ and $n\in\mathbb{N}$. 
    \begin{enumerate}
        \item If $n$ is odd, then
        \begin{align*}
            I_{n, \rho} &=\displaystyle\frac{(-1)^{\frac{n-1}{2}}}{2^{n-3}\pi}\sum_{m=0}^{(n-1)/2}(-1)^m\binom{n}{m}\frac{\sin((n-2m)\rho\pi)}{(n-2m)^2}\\
            \displaystyle&\qquad +(-1)^{\frac{n+1}{2}}\frac{\rho}{2^{n-2}}\sum_{m=0}^{(n-1)/2}(-1)^m\binom{n}{m}\frac{\cos((n-2m)\rho\pi)}{n-2m},\\
            J_{n, \rho} &=\frac{1}{2^{n-3}\pi}\sum_{m=0}^{(n-1)/2}\binom{n}{m}\frac{(1-\cos((n-2m)\rho\pi))}{(n-2m)^2}\\
            &\qquad-\frac{2\rho-1}{2^{n-1}}\sum_{m=0}^{(n-1)/2}\binom{n}{m}\frac{\sin((n-2m)\rho\pi)}{n-2m}.
        \end{align*}
        \item If $n$ is even, then
        \begin{align*}
            I_{n, \rho} &=\displaystyle\frac{(-1)^{\frac{n-2}{2}}}{2^{n-4}\pi}\sum_{m=0}^{(n-2)/2}(-1)^m\binom{n}{m}\frac{(1-\cos((n-2m)\rho\pi))}{(n-2m)^2}\\
            \displaystyle &\qquad +(-1)^{\frac{n}{2}}\frac{4\rho-1}{2^{n-1}}\sum_{m=0}^{(n-2)/2}(-1)^m\binom{n}{m}\frac{\sin((n-2m)\rho\pi)}{n-2m},\\
            J_{n, \rho}&=\frac{1}{2^{n-4}\pi}\sum_{m=0}^{(n-2)/2}\binom{n}{m}\frac{(1-\cos((n-2m)\rho\pi))}{(n-2m)^2}\\
            &\qquad-\frac{4\rho-1}{2^{n-1}}\sum_{m=0}^{(n-2)/2}\binom{n}{m}\frac{\sin((n-2m)\rho\pi)}{n-2m}.
        \end{align*}
    \end{enumerate}
\end{theorem}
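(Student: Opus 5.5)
The plan is to replace the rapidly oscillating factor by its average over a period, and only then compute. For a continuous $g$ on $[0,\rho\pi]$ and a continuous $2\pi$-periodic $f$, the standard equidistribution (Riemann--Lebesgue/weak-$*$) argument gives
\[
\lim_{k\to\infty}\int_0^{\rho\pi}|f(kx)-g(x)|\,dx=\int_0^{\rho\pi}\Phi(g(x))\,dx,\qquad \Phi(y):=\frac1{2\pi}\int_0^{2\pi}|f(t)-y|\,dt .
\]
To justify it, split $[0,\rho\pi]$ into the full periods $[2\pi j/k,2\pi(j+1)/k]$ plus one leftover piece of length $O(1/k)$. On each full period, replace $g(x)$ by its value at the left endpoint; the error is controlled by the uniform continuity of $g$. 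The sum of the resulting period averages is then a Riemann sum for $\int\Phi(g)$. We apply this with $f=\sin^n$, $g=\sin^n$ for $I_{n,\rho}$, and with $f=g=\cos^n$ for $J_{n,\rho}$.

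Next we evaluate $\Phi(g(x))$ explicitly by locating where the integrand changes sign. This is where $\rho\le 1/2$ is used: for $x\in[0,\pi/2]$ we have $y=\sin^n x\in[0,1]$, and on a period $\sin^n t>y$ exactly for $t\in(x,\pi-x)$.
\begin{itemize}
\item For $n$ odd, $\sin^n t<0\le y$ on $(\pi,2\pi)$. Since $\int_0^{2\pi}\sin^n t\,dt=0$, we get $\Phi(y)=y+\frac1\pi\int_x^{\pi-x}(\sin^n t-y)\,dt$.
\item For $n$ even, we use the period $\pi$ and obtain $\Phi(y)=\frac1\pi\int_0^\pi(y-\sin^n t)\,dt+\frac2\pi\int_x^{\pi-x}(\sin^n t-y)\,dt$, with $\frac1\pi\int_0^\pi\sin^n t\,dt=2^{-n}\binom{n}{n/2}$.
\item For cosine the same structure holds: for $x\in[0,\pi/2]$, $\cos^n t>\cos^n x$ exactly when $|t|<x$ modulo the period. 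Hence $\Phi$ involves $\int_{-x}^{x}(\cos^n t-\cos^n x)\,dt$, plus, for odd $n$, the constant term $\cos^n x$ coming from the negative half-period.
\end{itemize}

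We then insert the power-reduction formulas. For odd $n$,
\[
\sin^n t=\frac{(-1)^{\frac{n-1}{2}}}{2^{n-1}}\sum_{m=0}^{(n-1)/2}(-1)^m\binom nm\sin((n-2m)t),\qquad \cos^n t=\frac1{2^{n-1}}\sum_{m}\binom nm\cos((n-2m)t).
\]
For even $n$ the analogous cosine-sum formulas hold, together with the constant term $2^{-n}\binom{n}{n/2}$. The final formula must not contain this constant term, so we check that it cancels between the $\int_0^\pi$ piece and the $-y(\pi-2x)$ piece.

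The inner $t$-integral over $(x,\pi-x)$ (respectively $(-x,x)$) produces terms $\cos((n-2m)x)/(n-2m)$ (respectively $\sin((n-2m)x)/(n-2m)$). The remaining terms are of the form $(\pi-2x)\sin^n x$ or $2x\cos^n x$, or $x$ times $\cos^n x$. We integrate these over $x\in[0,\rho\pi]$ by expanding $\sin^n x$ and $\cos^n x$ again and integrating the factor linear in $x$ by parts. This generates exactly the two families in the statement. The terms $\sin((n-2m)\rho\pi)/(n-2m)^2$ and $(1-\cos((n-2m)\rho\pi))/(n-2m)^2$ come with $1/\pi$. The terms with linear coefficients in $\rho$ (such as $\rho$, $2\rho-1$, $4\rho-1$) multiply $\cos$ or $\sin((n-2m)\rho\pi)/(n-2m)$, and arise from the boundary terms of the integrations by parts.

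The main obstacle is this last step: bookkeeping the signs $(-1)^{(n\mp1)/2}$, the even/odd split, and especially the cancellation of the $\binom{n}{n/2}$ constant and of the stray $\rho$-linear pieces, so that the result collapses to precisely the stated coefficients. I would first verify the resulting formulas at $n=1,2$ and $\rho=1/2$ against the values of $\mathcal{A}_n$ (by symmetry $J_{n,1}=2J_{n,1/2}$), to fix the signs before writing out the general computation.
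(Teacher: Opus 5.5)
Your proposal is correct, and it takes a genuinely different route from the paper.

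\textbf{How the paper argues.} The paper works at fixed $k$. It solves $s_{n,k}=0$ and $c_{n,k}=0$ explicitly (zeros at $(2a-1)\pi/(k+1)$, $2b\pi/(k-1)$, etc.) and removes the absolute value on each sign interval. It then sums the resulting telescoping trigonometric series in closed form (Lemmas~\ref{lem2.5} and~\ref{newlem2.6}) and lets $k\to\infty$ using the cotangent-type asymptotics of Corollary~\ref{cor2.2}. The sign pattern near $\rho\pi$ depends on fractional parts such as $\rho(k-1)/2-\lfloor\rho(k-1)/2\rfloor$. So the paper must first take $\rho\in\mathbb{Q}$, where these take finitely many values (Lemma~\ref{lem2.4}), and then reach irrational $\rho$ by Moore--Osgood.

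\textbf{What your route does instead.} Your two-scale averaging lemma is complete as sketched: full periods, uniform continuity of $g$, and a Riemann sum for $\Phi\circ g$, which is continuous because $\Phi$ is $1$-Lipschitz. It handles all real $\rho$ at once and reduces everything to one explicit function.

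The step you flag as the main obstacle does close. Set $c_n:=2^{-n}\binom{n}{n/2}$, take $x\in[0,\pi/2]$, and build $\Phi$ from $f=\sin^n$ or $f=\cos^n$ respectively. Your sign analysis then gives
\[
\Phi(\sin^n x)=\begin{cases}
\dfrac{2x}{\pi}\sin^n x+\dfrac{2}{\pi}\int_x^{\pi/2}\sin^n t\,dt, & n\text{ odd},\\[2mm]
\dfrac{4x-\pi}{\pi}\sin^n x-c_n+\dfrac{4}{\pi}\int_x^{\pi/2}\sin^n t\,dt, & n\text{ even},
\end{cases}
\]
\[
\Phi(\cos^n x)=\begin{cases}
\dfrac{\pi-2x}{\pi}\cos^n x+\dfrac{2}{\pi}\int_0^{x}\cos^n t\,dt, & n\text{ odd},\\[2mm]
\dfrac{\pi-4x}{\pi}\cos^n x-c_n+\dfrac{4}{\pi}\int_0^{x}\cos^n t\,dt, & n\text{ even}.
\end{cases}
\]

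Rather than integrating by parts, differentiate the stated right-hand side in $\theta=\rho\pi$, treating the coefficients $\rho$, $2\rho-1$, $4\rho-1$ as functions of $\theta$ as well.
\begin{enumerate}
\item By Lemma~\ref{lem2.3}(5),(6), each $1/(n-2m)$-sum is a constant multiple of $\int_\theta^{\pi/2}\sin^n t\,dt$ or $\int_0^\theta\cos^n t\,dt$ in the odd cases, and of $\int_0^\theta(\sin^n t-c_n)\,dt$ or $\int_0^\theta(\cos^n t-c_n)\,dt$ in the even cases.
\item Each $1/(n-2m)^2$-sum is a primitive of the matching $1/(n-2m)$-sum.
\item Hence the derivative equals the corresponding line above, and both sides vanish at $\theta=0$.
\end{enumerate}
This is essentially the differentiation the paper performs in Section~\ref{sec4}. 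I checked all four cases and they agree.

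\textbf{What each approach buys.} In your framework, $x\mapsto\Phi(g(x))$ is $\pi$-periodic and symmetric about $\pi/2$; for odd $n$ this is because $f(t+\pi)=-f(t)$ forces $\Phi(-y)=\Phi(y)$. Theorems~\ref{thm1.3} and~\ref{thm1.5} then follow in one line each. That includes the case $n$ odd, $k$ even, which costs the paper most of Section~\ref{sec5}, and it gives one uniform reason for both identities in Theorem~\ref{thm1.3}. The paper's method, in return, gives an exact finite-$k$ decomposition of $I_{n,k,\rho}$ and $J_{n,k,\rho}$, using only the fundamental theorem of calculus and finite trigonometric sums.
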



These theorems are interesting because of their elegance and simplicity, and their proofs require several tools from calculus and analysis.

\subsection{Recursive formulas of  $I_{n, \rho}$, $J_{n, \rho}$ for $\rho\in[0, 1/2]$}

Alternatively, we can compute the values of $I_{n, \rho}$ and $J_{n, \rho}$ for $\rho\in[0, 1/2]$ using the following simple recursive formulas.

\begin{theorem}\label{thm1.2}
    Let $n\in\mathbb{N}_{\ge 3}$ and $\rho\in[0, 1/2]$. 
    \begin{enumerate}
        \item If $n$ is odd, then
        \begin{align*}
             I_{n, \rho}&=\left(\frac{n-1}{n}\right)I_{n-2, \rho}+\frac{4}{\pi n^2}\sin^n(\rho\pi)-\frac{2\rho}{n}\sin^{n-1}(\rho\pi)\cos(\rho\pi),\\
              J_{n, \rho}&=\left(\frac{n-1}{n}\right)J_{n-2, \rho}+\frac{4}{\pi n^2}(1-\cos^n(\rho\pi))-\frac{(2\rho-1)}{n}\cos^{n-1}(\rho\pi)\sin(\rho\pi).
        \end{align*}
        \item If $n$ is even, then
        \begin{align*}
            I_{n, \rho}&=\left(\frac{n-1}{n}\right)I_{n-2, \rho}+\displaystyle\frac{8}{\pi n^2}\sin^n(\rho\pi)-\left(\frac{4\rho-1}{n}\right)\sin^{n-1}(\rho\pi)\cos(\rho\pi),\\ 
            J_{n, \rho}&=\left(\frac{n-1}{n}\right)J_{n-2, \rho}+\frac{8}{\pi n^2}(1-\cos^n(\rho\pi))-\left(\frac{4\rho-1}{n}\right)\cos^{n-1}(\rho\pi)\sin(\rho\pi).
        \end{align*}
    \end{enumerate}
\end{theorem}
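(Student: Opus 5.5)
Since Theorem~\ref{thm1.1} already gives closed forms for $I_{n,\rho}$ and $J_{n,\rho}$ on $\rho\in[0,1/2]$, I would derive Theorem~\ref{thm1.2} from them directly rather than returning to the integrals. The natural route is to recognise each closed form as an antiderivative of a standard power-reduction expansion, and then use the classical reduction formula
\[
\int \sin^n t\,dt=-\frac{1}{n}\sin^{n-1}t\cos t+\frac{n-1}{n}\int\sin^{n-2}t\,dt,
\]
and its cosine analogue.

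\textbf{Step 1: rewrite the closed forms as integrals.} For odd $n$ we have the expansion
\[
\sin^n t=\frac{(-1)^{(n-1)/2}}{2^{n-1}}\sum_{m=0}^{(n-1)/2}(-1)^m\binom{n}{m}\sin((n-2m)t).
\]
Using it, I would check that the first sum in $I_{n,\rho}$ (the $1/(n-2m)^2$ terms) equals $\frac{2}{\pi}\int_0^{\rho\pi}F_n(t)\,dt$, where $F_n$ is the antiderivative of $\sin^n$ with $F_n(0)=0$. I would also check that the second sum equals $-2\rho F_n'(\rho\pi)$, up to the appropriate pieces. More cleanly, I would set
\[
\Phi_n(\rho):=\frac{4}{\pi}\int_0^{\rho\pi}\!\!\int_0^{u}\sin^n t\,dt\,du
\]
and match $I_{n,\rho}$ to an expression of the form $a\,\Phi_n(\rho)+b\,\rho\int_0^{\rho\pi}\sin^n t\,dt+c\,\rho\sin^n(\rho\pi)\cdots$ by differentiating in $\rho$. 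The cleanest check is to compare $\frac{d^2}{d\rho^2}$ of both sides together with their values at $\rho=0$.

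For even $n$ the analogous expansion carries the constant term $\frac{1}{2^n}\binom{n}{n/2}$, which the sums in Theorem~\ref{thm1.1} omit. So I would write the even case in terms of $\sin^n t-\frac{1}{2^n}\binom{n}{n/2}$. The cosine case is identical with $\cos^n$, whose expansion has no alternating signs; this matches the absence of $(-1)^m$ in $J$.

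\textbf{Step 2: apply the reduction formula termwise.} Once $I_{n,\rho}$ is written as a fixed linear functional $L_\rho$ (iterated integrals plus boundary evaluations with factors $\rho$) applied to $\sin^n$ (or its centred version), I would apply $L_\rho$ to the identity
\[
\sin^n t=\frac{n-1}{n}\sin^{n-2}t+\frac{1}{n}\frac{d}{dt}\left(-\sin^{n-1}t\cos t\right).
\]
The first term produces $\frac{n-1}{n}I_{n-2,\rho}$. One must check that the constant terms for even $n$ are compatible: $\binom{n}{n/2}/2^n=\frac{n-1}{n}\binom{n-2}{n/2-1}/2^{n-2}$ holds, so the centred versions satisfy the same reduction. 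The derivative term integrates once, leaving boundary contributions in $\sin^{n-1}(\rho\pi)\cos(\rho\pi)$ and, after the second integration, terms like $\frac{1}{n}\int\sin^{n-1}t\cos t\,dt=\frac{\sin^n}{n^2}$. These should produce exactly $\frac{4}{\pi n^2}\sin^n(\rho\pi)$ (or $\frac{8}{\pi n^2}$ for even $n$) and $-\frac{2\rho}{n}\sin^{n-1}\cos$ (or $-\frac{4\rho-1}{n}$). For cosine, the antiderivative of $\cos^{n-1}t\sin t$ yields $(1-\cos^n(\rho\pi))/n^2$.

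\textbf{Main obstacle.} The main difficulty is Step 1: pinning down the exact functional $L_\rho$, in particular the factors $2$ versus $4$ and the offsets $2\rho-1$ and $4\rho-1$. These encode how the limiting average of $|\sin^n(kx)-y|$ over a period depends on $y$. A fallback that avoids identifying $L_\rho$ altogether is to verify the recursion directly on the sums. One substitutes the formulas of Theorem~\ref{thm1.1}, expands $\sin^n(\rho\pi)$ and $\sin^{n-1}(\rho\pi)\cos(\rho\pi)=\frac{1}{n}\frac{d}{d\theta}\sin^n\theta$ via the same Fourier expansions, and compares coefficients of $\sin((n-2m)\rho\pi)$, $\cos((n-2m)\rho\pi)$ and $\rho$ times these. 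The coefficient identity needed is $\binom{n}{m}\frac{1}{n-2m}$ versus $\frac{n-1}{n}\binom{n-2}{m-1}$, with the index shift $m\mapsto m-1$. This follows from $\binom{n}{m}-\frac{n}{n-2m}\cdot\frac{n-1}{n}\binom{n-2}{m-1}\cdot\frac{n-2m}{n-2m}\cdots$, i.e.\ from Pascal-type identities such as $\binom{n}{m}(n-2m)\cdot\frac{1}{n}=\binom{n-1}{m}-\binom{n-1}{m-1}$. This is mechanical, and I would carry it out for the $\rho$-free and $\rho$-linear parts separately.
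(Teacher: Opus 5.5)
Your strategy is the paper's: read the closed forms of Theorem~\ref{thm1.1} as iterated antiderivatives of $\sin^n$ or $\cos^n$ (centred by $c_n=\frac{1}{2^n}\binom{n}{n/2}$ when $n$ is even), then apply the reduction formula together with $c_n=\frac{n-1}{n}c_{n-2}$, which is the paper's ``shortcut''. However, Step~1, which you rightly call the crux, is mis-specified in a way that matters.

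Take odd $n$ in the sine case, with $\theta=\rho\pi$ and the prefactor $\rho$ frozen. The paper's first sum $P_n$ satisfies $P_n''=-\frac{4}{\pi}\sin^n$, $P_n(0)=0$ and $P_n'(\pi/2)=0$, whereas $P_n'(0)=\frac{4}{\pi}F_n(\pi/2)\neq 0$. The second sum satisfies $Q_n'=2\rho\sin^n$ with $Q_n(\pi/2)=0$. Hence, with $F_n(u)=\int_0^u\sin^n t\,dt$ and your $\Phi_n$,
\[
I_{n,\rho}=-\Phi_n(\rho)+2\rho F_n(\rho\pi)+2\rho F_n(\pi/2).
\]
Both of your tentative identifications (first sum $=\frac{2}{\pi}\int_0^{\rho\pi}F_n$, second sum $=-2\rho F_n'(\rho\pi)$) are false. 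Your proposed test, matching $\frac{d^2}{d\rho^2}$ and the value at $\rho=0$, cannot see terms linear in $\rho$. It is passed by $-\Phi_n(\rho)+2\rho F_n(\rho\pi)$, which differs from $I_{n,\rho}$ by $2\rho F_n(\pi/2)$; already for $n=1$ the difference is $2\rho$. Step~2 run on that identity would be unsound. It would reach the right recursion only because $F_n(\pi/2)=\frac{n-1}{n}F_{n-2}(\pi/2)$ happens to hold.

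The fix is either to match $\frac{d}{d\rho}$ at $\rho=0$ as well, or to anchor at $\theta=\pi/2$ as the paper does. The other three cases are anchored at $\theta=0$ ($P_n(0)=P_n'(0)=Q_n(0)=0$, respectively $H_n(0)=H_n'(0)=K_n(0)=0$) and cause no trouble. With the correct representation, your Step~2 works. The boundary terms of $h(t)=-\sin^{n-1}t\cos t$ vanish at $0$ and $\pi/2$, and you get exactly $\frac{4}{\pi n^2}\sin^n(\rho\pi)-\frac{2\rho}{n}\sin^{n-1}(\rho\pi)\cos(\rho\pi)$.

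Your fallback is sound and is a purely algebraic alternative. Treat $\rho$ and $\theta$ as independent, which is sufficient. Expand in $\sin((n-2m)\theta)$ and $\cos((n-2m)\theta)$, using $1-\cos((n-2m)\theta)$ for the $\rho$-free sums; this is legitimate since $\sin^n0=0$ and $1-\cos^n0=0$. Then every coefficient comparison, in all four cases and for both the $\rho$-free and $\rho$-dependent parts, reduces to the single identity
\[
\binom{n}{m}\Bigl(1-\frac{(n-2m)^2}{n^2}\Bigr)=\frac{4(n-1)}{n}\binom{n-2}{m-1}\qquad\bigl(\text{with }\tbinom{n-2}{-1}=0\bigr),
\]
that is, $\binom{n}{m}\frac{4m(n-m)}{n^2}=\frac{4(n-1)}{n}\binom{n-2}{m-1}$. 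This is the identity you need to verify, not the Pascal-type chain in your last paragraph.
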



\subsection{Relations between $R_{n, \rho}$, $I_{n, \rho}$ and $J_{n, \rho}$, $T_{n, \rho}$ with $\rho\in[1/2, 1]$}

Here, we can compute the values of $I_{n, \rho}$ and $J_{n, \rho}$ for $\rho\in[1/2, 1]$.

\begin{theorem}\label{thm1.3}
    Let $\rho\in[1/2, 1]$ and $n\in\mathbb{N}$. Then we have
    \begin{align*}
        R_{n, \rho}=I_{n, 1/2}-I_{n, 1-\rho}\quad\text{and}\quad T_{n, \rho}=J_{n, 1/2}-J_{n, 1-\rho}.
    \end{align*}
\end{theorem}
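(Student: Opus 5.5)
The idea is to reflect the interval $[\pi/2,\rho\pi]$ onto $[(1-\rho)\pi,\pi/2]$ by the substitution $x=\pi-y$. This turns $R_{n,k,\rho}$ into an integral over a subinterval of $[0,\pi/2]$, but with a $k$-dependent sign twist in the fast term. Under $x=\pi-y$ we have $\sin x=\sin y$ and $\sin(kx)=\sin(k\pi-ky)=(-1)^{k+1}\sin(ky)$. Hence
\[
|s_{n,k}(\pi-y)|=\bigl|(-1)^{(k+1)n}\sin^n(ky)-\sin^n y\bigr|.
\]
Similarly, $\cos(kx)=(-1)^k\cos(ky)$ and $\cos x=-\cos y$, so
\[
|c_{n,k}(\pi-y)|=\bigl|(-1)^{kn}\cos^n(ky)-(-1)^n\cos^ny\bigr|=\bigl|(-1)^{(k+1)n}\cos^n(ky)-\cos^ny\bigr|.
\]
It follows that
\[
R_{n,k,\rho}=\int_{(1-\rho)\pi}^{\pi/2}\bigl|\epsilon_k\sin^n(ky)-\sin^ny\bigr|\,dy,\qquad \epsilon_k:=(-1)^{(k+1)n}.
\]
If $\epsilon_k=1$, this is exactly $I_{n,k,1/2}-I_{n,k,1-\rho}$. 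That settles the case $n$ even, where $\epsilon_k=1$ for all $k$, and also the subsequence of odd $k$ when $n$ is odd. Taking limits (which exist, by Theorem~\ref{thm1.1}, since $1-\rho\in[0,1/2]$) gives the claimed identities along these $k$.

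The remaining case is $n$ odd and $k$ even, where the integrand becomes $|\sin^n(ky)+\sin^ny|$. To handle it, I would show that the limit of an integral $\int_a^b|\epsilon\, g(ky)-f(y)|\,dy$ is independent of the sign $\epsilon$ whenever $g$ is $2\pi/k$-periodic in $ky$ with symmetric distribution, i.e.\ $g$ and $-g$ are equidistributed over a period. The cleanest route is a Riemann–Lebesgue/equidistribution lemma: for continuous $F$ and $f$ and $2\pi$-periodic continuous $g$,
\[
\int_a^b F\bigl(g(ky),f(y)\bigr)\,dy\longrightarrow\int_a^b\frac{1}{2\pi}\int_0^{2\pi}F\bigl(g(t),f(y)\bigr)\,dt\,dy .
\]
This is proved by splitting $[a,b]$ into periods of length $2\pi/k$ and using uniform continuity. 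Applying it with $F(u,v)=|u-v|$ and $|{-u}-v|$, and using $t\mapsto t+\pi$, under which $\sin^n$ and $\cos^n$ change sign for odd $n$, the two limits coincide. This shows the limit of $R_{n,k,\rho}$ along even $k$ equals the one along odd $k$, so $R_{n,\rho}$ exists and equals $I_{n,1/2}-I_{n,1-\rho}$. The same argument applies verbatim to $T_{n,k,\rho}$ with cosines.

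The main obstacle is this equidistribution step, since it has to be made rigorous. Also, the lemma by itself already computes every limit, which makes it a slight departure from the paper's zero-locating approach. If a more elementary route is preferred, the alternative is a direct argument that $\sin^n(ky)$ restricted to consecutive half-periods alternates sign. Shifting $y$ by $\pi/k$ then maps $\sin^n(ky)$ to $-\sin^n(ky)$ while changing $\sin^n y$ by only $O(1/k)$, so the two integrals differ by $O(1/k)$ plus boundary terms of length $O(1/k)$. This shift argument is simpler and is what I would write first.
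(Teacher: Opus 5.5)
Your proposal is correct, and in the only nontrivial case ($n$ odd, $k$ even) it takes a genuinely different route from the paper.

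After the same reflection $u=\pi-x$, the paper does not compare the sign-twisted integrand with the untwisted one. Instead it goes back to locating the zeros of $s_{n,k}$ and $c_{n,k}$ on $[\pi/2,\rho\pi]$. It splits $k$ into the classes $A_\rho$, $B_\rho$ (and, for cosines, according to $k$ modulo $4$). It then evaluates the resulting telescoping sums with Lemma~\ref{lem2.6}, which needs $\rho\in\mathbb{Q}$, and obtains closed forms for $R_{n,\rho}$ and $T_{n,\rho}$. Finally it checks that these agree with $I_{n,1/2}-I_{n,1-\rho}$ and $J_{n,1/2}-J_{n,1-\rho}$ as computed from Theorem~\ref{thm1.1}.

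Your half-period shift $y\mapsto y+\pi/k$ replaces all of this with an estimate at fixed $k$. It uses three facts: $\sin^n(ky+\pi)=-\sin^n(ky)$ for odd $n$; $\sin^n$ is $n$-Lipschitz; and $|s_{n,k}|\le 2$ on the two boundary slivers of length $\pi/k$. Together these give
\[
\bigl|R_{n,k,\rho}-(I_{n,k,1/2}-I_{n,k,1-\rho})\bigr|\le\frac{\pi}{k}\Bigl(\frac{n\pi}{2}+4\Bigr)
\]
for every $k$, and the same bound holds for $T_{n,k,\rho}$ with cosines.

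This route buys you several things:
\begin{itemize}
\item You need only the existence of the limits in Theorem~\ref{thm1.1}, not their explicit values.
\item No rationality assumption on $\rho$ enters at this step.
\item The sine and cosine identities follow from one and the same symmetry. This explains why they have the same form, which the authors' remark after the proof leaves open.
\end{itemize}

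Your equidistribution lemma is also valid, but it is more than you need. On its own it would give the existence of every limit in the paper, together with the representation $I_{n,\rho}=\int_0^{\rho\pi}\frac{1}{2\pi}\int_0^{2\pi}|\sin^nt-\sin^ny|\,dt\,dy$.

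What the paper's heavier route buys in return is an independent explicit evaluation of $R_{n,\rho}$ and $T_{n,\rho}$ within its uniform zero-locating framework.
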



\begin{corollary}\label{cor1.4}
    Let $n\in\mathbb{N}_{\ge 3}$ and $\rho\in[1/2,1]$.
    \begin{enumerate}
        \item If $n$ is odd, then
        \begin{align*}
            I_{n,\rho}&=\left(\frac{n-1}{n}\right)I_{n-2,\rho}+\displaystyle\frac{2(1-\rho)}{n}\sin^{n-1}((1-\rho)\pi)\cos((1-\rho)\pi)\\
                    &\qquad +\displaystyle\frac{4}{\pi n^2}(2-\sin^n((1-\rho)\pi)),\\
            J_{n,\rho}&=\left(\frac{n-1}{n}\right)J_{n-2, \rho}+\frac{(1-2\rho)}{n}\cos^{n-1}((1-\rho)\pi)\sin((1-\rho)\pi)\\
            &\qquad+\frac{4}{\pi n^2}(1+\cos^n((1-\rho)\pi)).
        \end{align*}
        \item If $n$ is even, then
        \begin{align*}
            I_{n,\rho}&=\left(\frac{n-1}{n}\right)I_{n-2,\rho}+\displaystyle\left(\frac{3-4\rho}{n}\right)\sin^{n-1}((1-\rho)\pi)\cos((1-\rho)\pi)\\
                    &\displaystyle\qquad+\frac{8}{\pi n^2}(2-\sin^n((1-\rho)\pi)),\\
        J_{n,\rho}&=\left(\frac{n-1}{n}\right)J_{n-2, \rho}+\left(\frac{3-4\rho}{n}\right)\cos^{n-1}((1-\rho)\pi)\sin((1-\rho)\pi)\\
            &\qquad+\frac{8}{\pi n^2}(1+\cos^n((1-\rho)\pi)).
        \end{align*}
    \end{enumerate}
\end{corollary}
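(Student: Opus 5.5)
The plan is to reduce everything to the case $\rho\in[0,1/2]$ and then use the recursions of \cref{thm1.2}. Fix $\rho\in[1/2,1]$ and put $\sigma:=1-\rho\in[0,1/2]$.

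\textbf{Step 1: reflection identity.} For each fixed $k$, additivity of the integral gives $I_{n,k,\rho}=I_{n,k,1/2}+R_{n,k,\rho}$. Both limits on the right exist: the first by \cref{thm1.1}, the second by \cref{thm1.3}. Letting $k\to\infty$ and applying \cref{thm1.3} gives
\[
I_{n,\rho}=I_{n,1/2}+R_{n,\rho}=2I_{n,1/2}-I_{n,\sigma},
\]
and likewise $J_{n,\rho}=2J_{n,1/2}-J_{n,\sigma}$. Since $n\ge 3$, the same identities hold with $n$ replaced by $n-2\ge 1$.

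\textbf{Step 2: combine the recursions.} From Step 1,
\[
I_{n,\rho}-\tfrac{n-1}{n}I_{n-2,\rho}=2\Big(I_{n,1/2}-\tfrac{n-1}{n}I_{n-2,1/2}\Big)-\Big(I_{n,\sigma}-\tfrac{n-1}{n}I_{n-2,\sigma}\Big).
\]
Both brackets are given by \cref{thm1.2}, because $1/2$ and $\sigma$ both lie in $[0,1/2]$. It remains to evaluate the inhomogeneous terms, using $\sin(\pi/2)=1$ and $\cos(\pi/2)=0$.

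\begin{itemize}
\item \emph{$I$, $n$ odd.} At $\rho=1/2$ the bracket equals $4/(\pi n^2)$. The total is therefore
\[
\tfrac{8}{\pi n^2}-\tfrac{4}{\pi n^2}\sin^n(\sigma\pi)+\tfrac{2\sigma}{n}\sin^{n-1}(\sigma\pi)\cos(\sigma\pi),
\]
which is the claimed formula.
\item \emph{$I$, $n$ even.} At $\rho=1/2$ the bracket equals $8/(\pi n^2)$. The linear coefficient becomes $(4\sigma-1)/n=(3-4\rho)/n$, as claimed.
\item \emph{$J$, $n$ odd.} At $\rho=1/2$ the bracket equals $4/(\pi n^2)$. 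We get
\[
\tfrac{8}{\pi n^2}-\tfrac{4}{\pi n^2}\big(1-\cos^n(\sigma\pi)\big)=\tfrac{4}{\pi n^2}\big(1+\cos^n(\sigma\pi)\big),
\]
together with the term $+\frac{2\sigma-1}{n}\cos^{n-1}(\sigma\pi)\sin(\sigma\pi)$. Here $2\sigma-1=1-2\rho$.
\item \emph{$J$, $n$ even.} The computation is the same, with $8$ in place of $4$ and coefficient $(4\sigma-1)/n=(3-4\rho)/n$.
\end{itemize}

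\textbf{Main point of care.} The only genuine issue is Step 1. One must make sure that the limits are taken legitimately, which holds because each piece converges separately. One must also check that \cref{thm1.3} is applied with its exact normalization, that is, with $R$ integrating from $\pi/2$. After that, the argument is a sign-bookkeeping exercise.
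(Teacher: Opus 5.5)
Your proposal is correct and is essentially the paper's own proof. The paper likewise uses Theorem~\ref{thm1.3} to write $I_{n,\rho}=I_{n,1/2}+R_{n,\rho}=2I_{n,1/2}-I_{n,1-\rho}$ (and similarly for $J$), applies Theorem~\ref{thm1.2} at $1/2$ and at $1-\rho\in[0,1/2]$, and subtracts the $n\mapsto n-2$ instance; your case-by-case evaluation of the inhomogeneous terms, which the paper spells out only for odd $n$ and $I$, checks out in all four cases.
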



\subsection{$I_{n, \rho}$, $J_{n, \rho}$ for general $\rho$}

Finally, we can compute $I_{n, \rho}$ and $J_{n, \rho}$ for general $\rho\in\mathbb{R}_{\ge 0}$ via the following formulas.

\begin{theorem}\label{thm1.5}
    Let $\rho\in\mathbb{R}_{\ge 0}$ and $n\in\mathbb{N}$. Then we have
    \begin{align*}
        I_{n, \rho}&=\begin{cases}
            \lfloor \rho\rfloor I_{n, 1}+I_{n, \{\rho\}} &\text{if $\lfloor \rho\rfloor$ is even},\\\\
            (\lfloor \rho\rfloor+1)I_{n, 1}-I_{n, 1-\{\rho\}} &\text{if $\lfloor \rho\rfloor$ is odd}.
        \end{cases}\\
        J_{n, \rho}&=\begin{cases}
           \lfloor \rho\rfloor J_{n, 1}+J_{n, \{\rho\}}   &\text{if $\lfloor \rho\rfloor$ is even},\\\\
            (\lfloor\rho\rfloor+1)J_{n, 1}-J_{n, 1-\{\rho\}}  &\text{if $\lfloor \rho\rfloor$ is odd}.
        \end{cases}
    \end{align*}
    Here $\{\rho\}:=\rho-\lfloor\rho\rfloor$ denotes the fractional part of $\rho$.
\end{theorem}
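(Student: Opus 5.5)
The idea is to exploit periodicity and reflection symmetry of the integrands, applied to the finite-$k$ integrals, and only then pass to the limit $k\to\infty$. Because the limits $I_{n,\rho}$ and $J_{n,\rho}$ exist for $\rho\in[0,1]$ (by Theorem~\ref{thm1.1} and Theorem~\ref{thm1.3}), it is enough to obtain linear relations among the $I_{n,k,\rho}$ that hold exactly, or up to $o(1)$, along a suitable subsequence of $k$, and then let $k\to\infty$. Since the statement concerns the full limit, I will note that the argument works for every $k$ provided the relations are exact. Set $N=\lfloor\rho\rfloor$ and $\sigma=\{\rho\}$.

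\textbf{Step 1: periodicity.} Because $\sin^n(kx)$ and $\sin^n x$ are both $\pi$-periodic in $x$ up to the sign $(-1)^{n(k+1)}$ and $(-1)^n$, we have $s_{n,k}(x+\pi)=\pm s_{n,k}(x)$ whenever $k$ is odd. When $k$ is even, the two terms pick up different signs if $n$ is odd. A cleaner approach uses $2\pi$-periodicity: $|s_{n,k}(x+2\pi)|=|s_{n,k}(x)|$ for every integer $k$. The same holds for $c_{n,k}$. For $\pi$-shifts, $|s_{n,k}(x+\pi)|=|\sin^n(kx)\pm\sin^n x|$, where the sign is $(-1)^{nk}(-1)^n$ up to an overall sign. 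This gives the relation exactly when $nk\equiv n \pmod 2$, in particular for all odd $k$.

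I plan to restrict to odd $k$, using the fact that the limit exists, so any subsequence gives the same value. For odd $k$, $|s_{n,k}|$ and $|c_{n,k}|$ are $\pi$-periodic. This yields
\[
I_{n,k,\rho}=N\,I_{n,k,1}+\int_{N\pi}^{(N+\sigma)\pi}|s_{n,k}|=N\,I_{n,k,1}+I_{n,k,\sigma}.
\]
Taking the limit settles the case where $N$ is even; in fact it gives this formula for every $N$.

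\textbf{Step 2: reflection, for $N$ odd.} The claimed formula is $(N+1)I_{n,1}-I_{n,1-\sigma}$. For consistency with Step 1, I will show $I_{n,1}=I_{n,\sigma}+I_{n,1-\sigma}$, i.e. $\int_{\sigma\pi}^{\pi}|s_{n,k}|=\int_0^{(1-\sigma)\pi}|s_{n,k}|$. This follows from the substitution $x\mapsto\pi-x$: for odd $k$ we have $\sin(k(\pi-x))=\sin(kx)$ and $\sin(\pi-x)=\sin x$, so $|s_{n,k}(\pi-x)|=|s_{n,k}(x)|$. For cosine, $\cos(k(\pi-x))=-\cos(kx)$ for odd $k$ and $\cos(\pi-x)=-\cos x$, so $|c_{n,k}(\pi-x)|=|c_{n,k}(x)|$ as well. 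Therefore
\[
N I_{n,1}+I_{n,\sigma}=N I_{n,1}+I_{n,1}-I_{n,1-\sigma},
\]
which is the stated formula. The same argument gives the $J$ version.

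\textbf{Main obstacle.} The main subtlety is justifying the restriction to odd $k$. This is legitimate because the paper defines $I_{n,\rho}$ as a limit over all $k$ and establishes that limit for $\rho\in[0,1]$. For general $\rho$, the existence of the limit along all $k$ must also be checked. For even $k$, I would show that the $\pi$-shift changes the integrand only to $|\sin^n(kx)+\sin^n x|$. Its integral over a period has the same limit, since as $k\to\infty$ the average of $|a\pm b^n|$ where $a=\sin^n(kx)$ equidistributes symmetrically is the same for both signs; this holds because the distribution of $\sin^n(kx)$ is symmetric for odd $n$. A Riemann–Lebesgue/equidistribution argument then shows the even-$k$ subsequence has the same limit, so the full limit exists.
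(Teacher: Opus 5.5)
Your identities along odd $k$ are correct (and for even $n$ they hold for every $k$, so that case is complete). For odd $n$, however, the argument does not yet prove the statement. The justification ``the limit exists, so any subsequence gives the same value'' is circular when $\rho>1$. Existence of $\lim_{k\to\infty}I_{n,k,\rho}$ over all $k$ is part of what the theorem asserts, and earlier results only supply the limits for $\rho\in[0,1]$. So everything rests on your even-$k$ paragraph, which is only a sketch. It can be made rigorous: for $F(t,x)$ continuous and $2\pi$-periodic in $t$ one has $\int_a^bF(kx,x)\,dx\to\int_a^b\frac{1}{2\pi}\int_0^{2\pi}F(t,x)\,dt\,dx$, and $\sin^n t$, $\cos^n t$ have symmetric distributions over a period for odd $n$. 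But you would have to state and prove this averaging lemma. You would also have to apply it not only to full periods but to the partial block $\int_0^{\sigma\pi}\lvert\sin^n(kx)+\sin^n x\rvert\,dx$ that appears when $N$ is odd. (Such a lemma would by itself give $I_{n,\rho}=\int_0^{\rho\pi}\Phi(x)\,dx$ with $\Phi(x)=\frac{1}{2\pi}\int_0^{2\pi}\lvert\sin^n t-\sin^n x\rvert\,dt$, from which the theorem is immediate. It is much heavier than this statement needs.)

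The missing idea is that no restriction on $k$ is necessary. You noted $2\pi$-periodicity but did not use it. Combine it with the evenness $\lvert s_{n,k}(-x)\rvert=\lvert s_{n,k}(x)\rvert$ and $c_{n,k}(-x)=c_{n,k}(x)$, i.e.\ substitute $t=(m+1)\pi-x$ on $[m\pi,(m+1)\pi]$ when $m$ is odd. This is exactly the paper's proof. For every $k$, each full block equals $I_{n,k,1}$. The tail $\int_{N\pi}^{\rho\pi}\lvert s_{n,k}\rvert\,dx$ equals $I_{n,k,\sigma}$ if $N$ is even and $I_{n,k,1}-I_{n,k,1-\sigma}$ if $N$ is odd. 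So $I_{n,k,\rho}$ is an exact linear combination of $I_{n,k,1}$, $I_{n,k,\sigma}$, $I_{n,k,1-\sigma}$, and the existence of the limit is inherited from the case $\rho\in[0,1]$. In particular, your troublesome even-$k$ integral satisfies $\int_0^{\sigma\pi}\lvert\sin^n(kx)+\sin^n x\rvert\,dx=\int_{(1-\sigma)\pi}^{\pi}\lvert s_{n,k}(u)\rvert\,du$ exactly, via $u=\pi-x$ and $\sin(k(\pi-x))=-\sin(kx)$ for even $k$. This also explains the case split in the theorem. For odd $N$ the form $(N+1)I_{n,k,1}-I_{n,k,1-\sigma}$ is exact for all $k$, whereas your uniform form $N I_{n,k,1}+I_{n,k,\sigma}$ is exact only for odd $k$ (or even $n$). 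The two agree only in the limit, through $I_{n,1}=I_{n,\sigma}+I_{n,1-\sigma}$.
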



\section{Preliminaries}\label{sec2}

This section is dedicated to stating various useful lemmas used in our work. 

\subsection{Trigonometric identities and analysis facts}

First, we need an analogous result to Lemma 8 of \cite{MG}, but in a more general context. Recall the meaning of the Landau's symbol \cite{KEA}. Let $f$ and $g$ be two functions defined on some $I\subset \mathbb{R}$.  Then we write $f(x)=\mathcal{O}(g(x))$ if there exists $C>0$ such that $|f(x)|\le C|g(x)|$ for all $x\in I$.

\begin{lemma}\label{lem2.1}
    Let $\beta\in\mathbb{R}\setminus \{0\}$. Suppose $f(x)$ and $g(x)$ are functions of the rate $1/(\beta x)+\mathcal{O}(x)$, when $x\to 0$. Then we have
    \[\lim_{k\to\infty}\left(\left(\frac{1}{k}-1\right)f\left(\frac{x}{k-1}\right)+\left(\frac{1}{k}+1\right)g\left(\frac{x}{k+1}\right)\right)=\frac{4}{\beta x}.\]
\end{lemma}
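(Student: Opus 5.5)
We write $f(t)=\frac{1}{\beta t}+\varepsilon_f(t)$ and $g(t)=\frac{1}{\beta t}+\varepsilon_g(t)$, where by hypothesis there are $C>0$ and $\delta>0$ with $|\varepsilon_f(t)|,|\varepsilon_g(t)|\le C|t|$ for $0<|t|<\delta$. Fix $x\neq 0$. For $k$ large, both arguments $x/(k-1)$ and $x/(k+1)$ lie in $(-\delta,\delta)\setminus\{0\}$, so the expansions apply. We then split the expression into a main part coming from the $1/(\beta t)$ terms and an error part coming from $\varepsilon_f,\varepsilon_g$, and handle the two separately.

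\emph{Main part.} Substituting the singular terms gives
\[
\left(\frac1k-1\right)\frac{k-1}{\beta x}+\left(\frac1k+1\right)\frac{k+1}{\beta x}
=\frac{1}{\beta x}\left(\frac{(k+1)^2-(k-1)^2}{k}\right).
\]
This follows from $\left(\frac1k-1\right)(k-1)=-\frac{(k-1)^2}{k}$ and $\left(\frac1k+1\right)(k+1)=\frac{(k+1)^2}{k}$. Since $(k+1)^2-(k-1)^2=4k$, the main part equals exactly $\frac{4}{\beta x}$ for every $k$. The point is that the large terms of size $\pm k/(\beta x)$ cancel, which is why the prefactors $\frac1k\mp1$ appear.

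\emph{Error part.} The remaining terms are bounded by
\[
\left|\frac1k-1\right|\cdot C\frac{|x|}{k-1}+\left(\frac1k+1\right)C\frac{|x|}{k+1}\le \frac{2C|x|}{k-1}\cdot 2,
\]
which tends to $0$ as $k\to\infty$. Combining the two parts gives the limit $\frac{4}{\beta x}$.

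The only delicate point is interpreting ``$\mathcal O(x)$ as $x\to0$''. The paper defines $\mathcal O$ globally on a set $I$, so we read the hypothesis as holding on a punctured neighbourhood of $0$. The argument only needs the bound to hold eventually, for the arguments $x/(k\pm1)$, which tend to $0$. We state this explicitly, note that $x$ is fixed and nonzero, and do not claim uniformity in $x$. If the intended hypothesis is only an $o(1)$ remainder, the same argument still works, because the remainder is multiplied by a bounded factor $|1/k\mp1|\le 2$.
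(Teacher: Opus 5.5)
Your proof is correct and is exactly the ``algebraic manipulation'' the paper has in mind (the paper supplies no further detail). The singular terms sum to exactly $\frac{1}{\beta x}\cdot\frac{(k+1)^2-(k-1)^2}{k}=\frac{4}{\beta x}$, and the $\mathcal{O}$-remainders, multiplied by the bounded prefactors $\frac1k\mp1$, vanish as $k\to\infty$ for fixed $x\neq 0$.
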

\noindent
{\bf Proof.} The proof is trivial by some algebraic manipulation.
\hfill $\square$

\begin{corollary}\label{cor2.2}
		Let $\alpha_1,\alpha_2,\beta,\gamma,x\in\mathbb{R}$ such that $\beta,x\neq0$. As $k\to\infty$, we have
		\begin{align*}
		    \left(\frac{1}{k}-1\right)\frac{\cos\left(\frac{\alpha_1x}{k-1}+\gamma\right)}{\sin\left(\frac{\beta x}{k-1}\right)}+\left(\frac{1}{k}+1\right)\frac{\cos\left(\frac{\alpha_2x}{k+1}+\gamma\right)}{\sin\left(\frac{\beta x}{k+1}\right)}\to\frac{4}{\beta x}\cos\gamma+\left(\frac{\alpha_1-\alpha_2}{\beta}\right)\sin\gamma,\\
		\left(\frac{1}{k}-1\right)\frac{\sin\left(\frac{\alpha_1x}{k-1}+\gamma\right)}{\sin\left(\frac{\beta x}{k-1}\right)}+\left(\frac{1}{k}+1\right)\frac{\sin\left(\frac{\alpha_2x}{k+1}+\gamma\right)}{\sin\left(\frac{\beta x}{k+1}\right)}\to\frac{4}{\beta x}\sin\gamma-\left(\frac{\alpha_1-\alpha_2}{\beta}\right)\cos\gamma.
		\end{align*}
	\end{corollary}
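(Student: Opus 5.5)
We will prove both limits by expanding each summand with the addition formula and treating the pieces separately. Write $u=x/(k-1)$ and $v=x/(k+1)$. For the first statement we have
\[\cos(\alpha_1u+\gamma)=\cos(\alpha_1u)\cos\gamma-\sin(\alpha_1u)\sin\gamma,\]
and similarly for the $v$-term. The left-hand side then splits into two parts. One is $\cos\gamma$ times
\[\Big(\tfrac1k-1\Big)\frac{\cos(\alpha_1 u)}{\sin(\beta u)}+\Big(\tfrac1k+1\Big)\frac{\cos(\alpha_2 v)}{\sin(\beta v)}.\]
The other is $-\sin\gamma$ times the same expression with $\cos(\alpha_i\cdot)$ replaced by $\sin(\alpha_i\cdot)$.

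For the $\cos\gamma$ part we invoke \cref{lem2.1} with
\[f(t)=\frac{\cos(\alpha_1 t)}{\sin(\beta t)},\qquad g(t)=\frac{\cos(\alpha_2 t)}{\sin(\beta t)}.\]
The Laurent expansions at $t=0$ are $\cos(\alpha t)=1-\alpha^2t^2/2+\mathcal O(t^4)$ and $1/\sin(\beta t)=1/(\beta t)+\beta t/6+\mathcal O(t^3)$. Multiplying gives
\[f(t)=\frac{1}{\beta t}+\mathcal O(t),\]
and likewise for $g$, valid for $t$ in a small punctured neighbourhood of $0$. Since $x\ne0$ is fixed and $u,v\to0$, only that neighbourhood matters. \cref{lem2.1} therefore yields the limit $4/(\beta x)$, which is multiplied by $\cos\gamma$.

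For the $\sin\gamma$ part there is no pole, so we compute directly. We have
\[\frac{\sin(\alpha_i t)}{\sin(\beta t)}\to\frac{\alpha_i}{\beta}\quad\text{as } t\to0,\]
together with $(1/k-1)\to-1$ and $(1/k+1)\to1$. Hence this part tends to $-\alpha_1/\beta+\alpha_2/\beta$. Multiplying by $-\sin\gamma$ gives $\frac{\alpha_1-\alpha_2}{\beta}\sin\gamma$, which matches the claim.

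The second statement is handled the same way, using
\[\sin(\alpha u+\gamma)=\sin(\alpha u)\cos\gamma+\cos(\alpha u)\sin\gamma.\]
The $\cos(\alpha_i\cdot)/\sin(\beta\cdot)$ pieces now carry the factor $\sin\gamma$ and, by \cref{lem2.1}, contribute $\frac{4}{\beta x}\sin\gamma$. The bounded ratios carry the factor $\cos\gamma$ and contribute $\big(-\frac{\alpha_1}{\beta}+\frac{\alpha_2}{\beta}\big)\cos\gamma=-\frac{\alpha_1-\alpha_2}{\beta}\cos\gamma$.

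The only delicate point is the hypothesis of \cref{lem2.1}. We must check that the error term in $f$ and $g$ is genuinely $\mathcal O(t)$, with no constant term, so that the singular parts cancel to leave exactly $4/(\beta x)$. This holds because $\cos(\alpha t)/\sin(\beta t)$ is odd in $t$. Because of this, we do not need to reprove the lemma's algebra.
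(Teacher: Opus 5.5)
Your proposal is correct and takes essentially the same route as the paper: you split each term with the angle addition formula, apply \cref{lem2.1} to the pieces $\cos(\alpha_i t)/\sin(\beta t)=1/(\beta t)+\mathcal{O}(t)$ to obtain $4/(\beta x)$, and use $\sin(\alpha_i t)/\sin(\beta t)\to\alpha_i/\beta$ for the bounded pieces. Your observation that $\cos(\alpha t)/\sin(\beta t)$ is odd in $t$, so no constant term can appear, is a tidy justification of the expansion the paper obtains by long division.
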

    \noindent
    {\bf Proof.} We will only prove the first limit. Using the standard angle addition formula, we first note that
		\begin{align*}
			A_k:&=\left(\frac{1}{k}-1\right)\frac{\cos\left(\frac{\alpha_1x}{k-1}+\gamma\right)}{\sin\left(\frac{\beta x}{k-1}\right)}+\left(\frac{1}{k}+1\right)\frac{\cos\left(\frac{\alpha_2x}{k+1}+\gamma\right)}{\sin\left(\frac{\beta x}{k+1}\right)}\\
			&=\cos\gamma\left(\left(\frac{1}{k}-1\right)\frac{\cos\left(\frac{\alpha_1x}{k-1}\right)}{\sin\left(\frac{\beta x}{k-1}\right)}+\left(\frac{1}{k}+1\right)\frac{\cos\left(\frac{\alpha_2x}{k+1}\right)}{\sin\left(\frac{\beta x}{k+1}\right)}\right)\\
			&\qquad-\sin\gamma\left(\left(\frac{1}{k}-1\right)\frac{\sin\left(\frac{\alpha_1x}{k-1}\right)}{\sin\left(\frac{\beta x}{k-1}\right)}+\left(\frac{1}{k}+1\right)\frac{\sin\left(\frac{\alpha_2x}{k+1}\right)}{\sin\left(\frac{\beta x}{k+1}\right)}\right).
		\end{align*}
        Next, we recall the Maclaurin series of $\cos(\alpha x)$ and $\sin(\beta x)$:
        \[\cos(\alpha x)=1-\frac{\alpha^2 x^2}{2!}+\frac{\alpha^4x^4}{4!}+\cdots\quad\text{and}\quad\sin(\beta x)=\beta x-\frac{\beta^3x^3}{3!}+\frac{\beta^5x^5}{5!}+\cdots.\]
        By using the long division algorithm, we have
        \[\frac{\cos(\alpha x)}{\sin(\beta x)}=\frac{1}{\beta x}-\frac{1}{\beta}\left(\frac{\alpha^2}{2!}-\frac{\beta^2}{3!}\right)x+\cdots=\frac{1}{\beta x}+\mathcal{O}(x),\]
        where we consider when $x$ is sufficiently near $0$.
        Thus applying Lemma \ref{lem2.1}, we yield
        \[\left(\frac{1}{k}-1\right)\frac{\cos\left(\frac{\alpha_1x}{k-1}\right)}{\sin\left(\frac{\beta x}{k-1}\right)}+\left(\frac{1}{k}+1\right)\frac{\cos\left(\frac{\alpha_2x}{k+1}\right)}{\sin\left(\frac{\beta x}{k+1}\right)}\to\frac{4}{\beta x},\]
		as $k\to\infty$.
        Next, using the well-known limit identity, we see that
        \[\frac{\sin\left(\frac{\alpha x}{k\pm1}\right)}{\sin\left(\frac{\beta x}{k\pm1}\right)}\to\frac{\alpha}{\beta},\]
        as $k\to\infty$, where $\beta\neq 0$. This implies
		\[A_k\to\cos\gamma\left(\frac{4}{\beta x}\right)-\sin\gamma\left(-\frac{\alpha_1}{\beta}+\frac{\alpha_2}{\beta}\right)=\frac{4}{\beta x}\cos\gamma+\left(\frac{\alpha_1-\alpha_2}{\beta}\right)\sin\gamma,\]
		as $k\to\infty$. So we are done. \hfill $\square$

\hfill

Additionally, for the ease of reference, we state the following result which can be found in Section 1.3 of \cite{II}.

\begin{lemma}\label{lem2.3}
    Let $\alpha,\theta\in[0,2\pi)$ with $\theta\ne 0,\pi$, and $n\in\mathbb{N}$. Then the following hold.
		\begin{enumerate}
			\item $\displaystyle\sum_{k=1}^n\cos(\alpha+k\theta)=\dfrac{\sin\left(\alpha+\left(n+\frac{1}{2}\right)\theta\right)-\sin\left(\alpha+\frac{\theta}{2}\right)}{2\sin\left(\frac{\theta}{2}\right)}$.
			\item $\displaystyle\sum_{k=1}^n\cos(\alpha+(2k-1)\theta)=\dfrac{\sin(\alpha+2n\theta)-\sin\alpha}{2\sin\theta}$.
			\item $\displaystyle\sum_{k=1}^n\sin(\alpha+k\theta)=\dfrac{\cos\left(\alpha+\frac{\theta}{2}\right)-\cos\left(\alpha+\left(n+\frac{1}{2}\right)\theta\right)}{2\sin\left(\frac{\theta}{2}\right)}$.
			\item $\displaystyle\sum_{k=1}^n\sin(\alpha+(2k-1)\theta)=\dfrac{\cos\alpha-\cos(\alpha+2n\theta)}{2\sin\theta}$.
			\item $\sin^n\theta=\begin{cases}
				\displaystyle\frac{(-1)^{\frac{n-1}{2}}}{2^{n-1}}\sum_{m=0}^{(n-1)/2}(-1)^m{n\choose m}\sin((n-2m)\theta)&\text{if }n\text{ is odd,}\\\\
				\displaystyle\frac{1}{2^n}{n\choose n/2}+\frac{(-1)^{n/2}}{2^{n-1}}\sum_{m=0}^{(n-2)/2}(-1)^m{n\choose m}\cos((n-2m)\theta)&\text{if }n\text{ is even.}
			\end{cases}$
			\item $\cos^n\theta=\begin{cases}
				\displaystyle\frac{1}{2^{n-1}}\sum_{m=0}^{(n-1)/2}{n\choose m}\cos((n-2m)\theta)&\text{if }n\text{ is odd,}\\\\
				\displaystyle\frac{1}{2^n}{n\choose n/2}+\frac{1}{2^{n-1}}\sum_{m=0}^{(n-2)/2}{n\choose m}\cos((n-2m)\theta)&\text{if }n\text{ is even.}
			\end{cases}$
		\end{enumerate}
\end{lemma}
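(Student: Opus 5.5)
The statement to prove is Lemma~\ref{lem2.3}, which gathers six classical identities. I would prove (1)--(4) by telescoping and (5)--(6) by binomial expansion of Euler's formula.

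\textbf{Parts (1)--(4).} For (1), multiply the sum by $2\sin(\theta/2)$ and use the product-to-sum identity
\[2\cos(\alpha+k\theta)\sin(\theta/2)=\sin\bigl(\alpha+(k+\tfrac12)\theta\bigr)-\sin\bigl(\alpha+(k-\tfrac12)\theta\bigr).\]
Summing over $k=1,\dots,n$ telescopes to $\sin(\alpha+(n+\frac12)\theta)-\sin(\alpha+\frac12\theta)$. Since $\theta\in(0,2\pi)$, we have $\sin(\theta/2)\neq0$, so we may divide and obtain (1).

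For (3), use instead
\[2\sin(\alpha+k\theta)\sin(\theta/2)=\cos\bigl(\alpha+(k-\tfrac12)\theta\bigr)-\cos\bigl(\alpha+(k+\tfrac12)\theta\bigr),\]
which telescopes in the same way.

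Parts (2) and (4) follow from (1) and (3) by replacing $\theta$ with $2\theta$ and $\alpha$ with $\alpha-\theta$, since $\alpha+(2k-1)\theta=(\alpha-\theta)+k(2\theta)$. After this substitution the half-angle becomes $\theta$, so the denominator is $2\sin\theta$. This is nonzero precisely because $\theta\neq0,\pi$, which explains that hypothesis. The argument $\alpha-\theta$ may leave $[0,2\pi)$, but the identities hold for all real $\alpha$, so this does no harm.

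\textbf{Parts (5)--(6).} For (6), write $\cos\theta=(e^{i\theta}+e^{-i\theta})/2$ and expand binomially:
\[\cos^n\theta=2^{-n}\sum_{m=0}^n\binom{n}{m}e^{i(n-2m)\theta}.\]
Pair the terms $m$ and $n-m$ using $\binom nm=\binom n{n-m}$; each pair gives $2\binom nm\cos((n-2m)\theta)$. For odd $n$ every term is paired. For even $n$ the middle term $m=n/2$ is unpaired and contributes $2^{-n}\binom{n}{n/2}$.

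For (5), write $\sin\theta=(e^{i\theta}-e^{-i\theta})/(2i)$. The expansion is
\[\sin^n\theta=(2i)^{-n}\sum_{m=0}^n(-1)^m\binom nm e^{i(n-2m)\theta}.\]
The paired terms $m$ and $n-m$ carry signs $(-1)^m$ and $(-1)^{n-m}=(-1)^n(-1)^m$.
\begin{itemize}
\item For odd $n$, each pair gives $(-1)^m\binom nm\cdot 2i\sin((n-2m)\theta)$. Dividing by $(2i)^n$ leaves the factor $2i/(2i)^n=i^{1-n}/2^{n-1}=(-1)^{(n-1)/2}/2^{n-1}$, since $i^{1-n}=(i^2)^{(1-n)/2}$.
\item For even $n$, each pair gives $2(-1)^m\binom nm\cos((n-2m)\theta)$. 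The prefactor is $i^{-n}=(-1)^{n/2}$, which produces the stated $(-1)^{n/2}/2^{n-1}$. The middle term is $(2i)^{-n}(-1)^{n/2}\binom n{n/2}=2^{-n}\binom n{n/2}$, because $(-1)^{n/2}(-1)^{n/2}=1$.
\end{itemize}

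The only real obstacle is this sign bookkeeping in (5): the powers of $i$ and the choice of $(-1)^m$ versus $(-1)^{n-m}$ in the pairing. I would check the result against $n=1,2,3$; for example, $\sin^3\theta=(3\sin\theta-\sin3\theta)/4$. Everything else is routine.
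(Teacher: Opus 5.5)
Your proof is correct. The paper gives no argument for this lemma; it simply refers to Section 1.3 of a standard table. Your telescoping derivation of (1)--(4) and the binomial expansion of $(e^{i\theta}\pm e^{-i\theta})^n$ for (5)--(6) therefore supply a self-contained proof of what the paper takes as known. I checked the sign bookkeeping in (5), namely $i^{1-n}=(-1)^{(n-1)/2}$ and $i^{-n}=(-1)^{n/2}$, and the middle term, and it is right.

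One small tightening: when you obtain (2) and (4) by substituting $\theta\mapsto 2\theta$, not only $\alpha-\theta$ but also $2\theta$ may leave $[0,2\pi)$. So state (1) and (3) for all real $\alpha,\theta$ with $\sin(\theta/2)\neq 0$, which is exactly what your telescoping argument proves.

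That generality is in fact what the paper needs. In the proof of Lemma \ref{lem2.6}, part (2) is applied with $\alpha=\frac{j\pi}{k-1}\bigl(2\lceil\frac{k+3}{4}\rceil-3\bigr)\approx j\pi/2$, which lies outside $[0,2\pi)$ for large $k$ once $j\ge 5$. Your version thus covers uses that the lemma as literally stated does not.
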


Lastly, we state a typical fact in analysis.

\begin{lemma}\label{lem2.4}
Let $A_1, \ldots, A_n$ partition $\mathbb{N}$. Suppose $A_1, \ldots, A_m$ are finite sets, where $m<n$, and $A_{m+1}, \ldots, A_n$ are infinite sets. Let $\{x_{j}\}_{j\in\mathbb{N}}$ be a sequence of real numbers. If $\{x_j\}_{j\in A_{i}}$, where $i=m+1, \ldots, n$, all converge to some $x\in\mathbb{R}$, then $\{x_j\}_{j\in\mathbb{N}}$ also converges to $x$.
\end{lemma}
\noindent
{\bf Proof.} Let $\varepsilon>0$. Then for each $j=m+1, \ldots, n$, there is $N_j>0$ such that for all $k\in A_j\cap [N_j, \infty)$, we have $|x_k-x|<\varepsilon$. We may assume that $[N_j, \infty)\cap\bigcup_{i=1}^mA_i =\emptyset$. Take $N:=\max\{N_{m+1}, \ldots, N_n\}$. Then $\{x_k\}\to x$ by definition, i.e., for all $k\ge N$, $|x_k-x|<\varepsilon$ (from $\bigsqcup_{j=1}^nA_j=\mathbb{N}$).\hfill $\square$

\subsection{Key ingredients}

The next few lemmas will be the main tools in proving our main results. Before that, we define
\[f_{j, k}(x):=\sin(jk x)-\sin(jx) \quad\text{and}\quad g_{j, k}(x):=\cos(jkx)-\cos(jx),\]
where $j, k\in\mathbb{N}$. Since all of them can be proven in the same way, we will provide a completely full detailed proof of Lemma \ref{lem2.5} when $j$ is odd, while in the proofs of other results, we may skip certain routine steps.

\begin{lemma}\label{lem2.5}
Let $\rho\in(0, 1/2]\cap\mathbb{Q}$ and $r\in\{0, 1\}$. 
\begin{enumerate}
			\item If $j$ is odd, then
			\begin{align*}
			    &\lim_{k\to\infty}\sum_{l=1}^{\left\lfloor\frac{\rho(k-1)}{2}\right\rfloor+r}\left(\int_{\frac{2(l-1)\pi}{k-1}}^{\frac{(2l-1)\pi}{k+1}}f_{j,k}(x)dx-\int_{\frac{(2l-1)\pi}{k+1}}^{\frac{2l\pi}{k-1}}f_{j,k}(x)dx\right)\\
                &\qquad=\frac{4}{\pi j^2}\sin(\rho j\pi)-\frac{2\rho}{j}\cos(\rho j\pi).
			\end{align*}
			\item If $j$ is even, then
			\begin{align*}
			    &\lim_{k\to\infty}\sum_{l=1}^{\lfloor\rho(k-1)\rfloor+r}\left(\int_{\frac{(l-1)\pi}{k-1}}^{\frac{l\pi}{k+1}}g_{j,k}(x)dx-\int_{\frac{l\pi}{k+1}}^{\frac{l\pi}{k-1}}g_{j,k}(x)dx\right)\\
                &\qquad=\frac{4\rho-1}{j}\sin(\rho j\pi)+\frac{8}{\pi j^2}(\cos(\rho j\pi)-1).
			\end{align*}
		\end{enumerate}
\end{lemma}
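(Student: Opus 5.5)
The plan is to compute each integral in the sum in closed form with elementary antiderivatives, add the resulting terms over $l$ with the telescoping sums of Lemma \ref{lem2.3}, and then take $k\to\infty$ using Corollary \ref{cor2.2}. We argue in detail for $j$ odd; the even case runs the same way with $g_{j,k}$ and the finer partition points $l\pi/(k\pm1)$.

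\textbf{Step 1 (closed form of each term).} Since $\int f_{j,k}=-\frac{\cos(jkx)}{jk}+\frac{\cos(jx)}{j}$, each summand equals $2F(\frac{(2l-1)\pi}{k+1})-F(\frac{2(l-1)\pi}{k-1})-F(\frac{2l\pi}{k-1})$ with $F(x)=-\frac{\cos(jkx)}{jk}+\frac{\cos(jx)}{j}$. At these points the fast oscillation simplifies:
\begin{align*}
jk\cdot\frac{2l\pi}{k-1}&=2jl\pi+\frac{2jl\pi}{k-1},\\
jk\cdot\frac{(2l-1)\pi}{k+1}&=j(2l-1)\pi-\frac{j(2l-1)\pi}{k+1}.
\end{align*}
Because $j$ is odd, $\cos(jkx)$ at the first points becomes $\cos(\frac{2jl\pi}{k-1})$ and at the second becomes $-\cos(\frac{j(2l-1)\pi}{k+1})$. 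Every term is therefore a cosine of an arithmetic progression in $l$, with step $\frac{2j\pi}{k\pm1}$. The $1/k$ factors from $F$ must be tracked, because they produce the $\frac1k\pm1$ coefficients that appear in Lemma \ref{lem2.1}.

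\textbf{Step 2 (summation over $l$).} Sum from $l=1$ to $N:=\lfloor\rho(k-1)/2\rfloor+r$. The sums $\sum_l\cos(\frac{2jl\pi}{k-1})$ and $\sum_l\cos(\frac{(2l-1)j\pi}{k+1})$ are evaluated by parts (1) and (2) of Lemma \ref{lem2.3}, with $\theta=\frac{2j\pi}{k-1}$ and $\theta=\frac{j\pi}{k+1}$ respectively; these are not $0$ or $\pi$ once $k$ is large. The sums of the slow terms $\cos(\frac{2jl\pi}{k-1})/j$ are handled the same way. Each sum becomes a difference of sines over $2\sin(\theta/2)$ or $2\sin\theta$, with arguments of the form $\frac{2Nj\pi}{k\pm1}+\mathcal O(1/k)$. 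Since $2N/(k\pm1)\to\rho$, these arguments tend to $\rho j\pi$.

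\textbf{Step 3 (the limit).} After collecting terms, the expression takes exactly the shape of Corollary \ref{cor2.2}: a pair of terms of the form $(\frac1k-1)\frac{\cos(\cdot/(k-1)+\gamma)}{\sin(\cdot/(k-1))}+(\frac1k+1)\frac{\cos(\cdot/(k+1)+\gamma)}{\sin(\cdot/(k+1))}$, with $x=\pi$, $\beta=j$ (or $2j$), and $\gamma$ essentially $\rho j\pi$. Corollary \ref{cor2.2} then gives the $\frac{4}{\pi j^2}\sin(\rho j\pi)$ part from its $\frac{4}{\beta x}$ term. It gives the $-\frac{2\rho}{j}\cos(\rho j\pi)$ part from its $\frac{\alpha_1-\alpha_2}{\beta}$ term, where $\alpha_1-\alpha_2$ is proportional to $\rho$ and comes from the mismatch between $2N/(k-1)$ and $2N/(k+1)$.

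\textbf{Main obstacle: the floor.} The floor makes $N$, and hence the phase, depend on the residue of $k$ modulo the denominator of $\rho$. This is where the rationality of $\rho$ enters. Write $\rho=p/q$ and split $\mathbb N$ into the residue classes of $k$ modulo $2q$. On each class, $\lfloor\rho(k-1)/2\rfloor=\rho(k-1)/2-c$ for a fixed constant $c$. Consequently $\frac{2N}{k\pm1}=\rho+\frac{\text{const}}{k\pm1}$ exactly, so the arguments have precisely the form $\frac{\alpha x}{k\pm1}+\gamma$ that Corollary \ref{cor2.2} requires.

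We must then check that the constants $c$ and $r$ cancel in the $\alpha_1-\alpha_2$ combination, so that every class has the same limit. The reason to expect this is that $c$ and $r$ enter symmetrically into the $(k-1)$ and $(k+1)$ pieces. Once every class has the same limit, Lemma \ref{lem2.4} gives the limit along all $k$. The error in this bookkeeping of offsets is where mistakes are most likely, so we would first verify the case $\rho=1/2$, $j=1$ against the known $\mathcal A_1$.
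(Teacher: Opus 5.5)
Your proposal is correct and follows essentially the same route as the paper: closed-form evaluation of each integral at the nodes, telescoping via Lemma~\ref{lem2.3} (the paper first pairs the two $(k-1)$-node cosines by sum-to-product and then uses part (2), which is equivalent to your direct use of part (1)), the limit via Corollary~\ref{cor2.2}, and the floor reduced to finitely many fractional parts on residue classes, combined by Lemma~\ref{lem2.4}. One bookkeeping correction: in the odd case the closed form is not purely of the shape in Corollary~\ref{cor2.2}, because it carries an extra bounded term $-\frac{1}{j}\left(\frac{1}{k}-1\right)\cos\left(\rho j\pi-\frac{2(c-r)j\pi}{k-1}\right)\to\frac{1}{j}\cos(\rho j\pi)$; accordingly $(\alpha_1-\alpha_2)/\beta$ equals $1+2\rho$ rather than a pure multiple of $\rho$, the extra $1$ is cancelled by that term, and the offsets $c$ and $r$ do cancel, as you anticipated.
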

\noindent
{\bf Proof of Lemma \ref{lem2.5} (odd $j$ case).} For $l\in\mathbb{N}$, by the fundamental theorem of calculus and the angle addition formula, we have
\begin{align*}
    &\int_{\frac{2(l-1)\pi}{k-1}}^{\frac{(2l-1)\pi}{k+1}}f_{j, k}(x)dx
    =\frac{1}{j}\left(\frac{1}{k}+1\right)\cos\left(\frac{j(2l-1)\pi}{k+1}\right)+\frac{1}{j}\left(\frac{1}{k}-1\right)\cos\left(\frac{2j(l-1)\pi}{k-1}\right),
\end{align*}
and analogously
\[\int_{\frac{(2l-1)\pi}{k+1}}^{\frac{2l\pi }{k-1}}f_{j, k}(x)dx=\frac{1}{j}\left(\frac{1}{k}+1\right)\cos\left(\frac{j(2l-1)\pi}{k+1}\right)+\frac{1}{j}\left(\frac{1}{k}-1\right)\cos\left(\frac{2jl\pi}{k-1}\right).\]
So we can write
\[\sum_{l=1}^{\left\lfloor\frac{\rho(k-1)}{2}\right\rfloor+r}\left(\int_{\frac{2(l-1)\pi}{k-1}}^{\frac{(2l-1)\pi}{k+1}}f_{j,k}(x)dx-\int_{\frac{(2l-1)\pi}{k+1}}^{\frac{2l\pi}{k-1}}f_{j,k}(x)dx\right)=B_k+C_k,\]
		where
		\[B_k:=\frac{2}{j}\left(\frac{1}{k}+1\right)\sum_{l=1}^{\left\lfloor\frac{\rho(k-1)}{2}\right\rfloor+r}\cos\left(\frac{j(2l-1)\pi}{k+1}\right)\]
		and
		\begin{align*}
			C_k&:=\frac{1}{j}\left(\frac{1}{k}-1\right)\sum_{l=1}^{\left\lfloor\frac{\rho(k-1)}{2}\right\rfloor+r}\left(\cos\left(\frac{2j(l-1)\pi}{k-1}\right)+\cos\left(\frac{2jl\pi}{k-1}\right)\right)\\
			&=\frac{2}{j}\left(\frac{1}{k}-1\right)\cos\left(\frac{j\pi}{k-1}\right)\sum_{l=1}^{\left\lfloor\frac{\rho(k-1)}{2}\right\rfloor+r}\cos\left(\frac{j(2l-1)\pi}{k-1}\right).
		\end{align*}
        Note that we apply the sum-to-product trigonometric formula in the equality above.
In the next step, we further compute $B_k$ and $C_k$ using Lemma \ref{lem2.3}. We have
\begin{align*}
			B_k&=\frac{2}{j}\left(\frac{1}{k}+1\right)\frac{\sin\left(2\left(\left\lfloor\frac{\rho(k-1)}{2}\right\rfloor+r\right)\left(\frac{j\pi}{k+1}\right)\right)}{2\sin(\frac{j\pi}{k+1})}
		=\frac{1}{j}\left(\frac{1}{k}+1\right)\frac{\sin\left(j\rho\pi-\frac{2(\rho+s_k-r)j\pi}{k+1}\right)}{\sin\left(\frac{j\pi}{k+1}\right)},
		\end{align*}
		where $s_k:=\rho(k-1)/2-\lfloor\rho(k-1)/2\rfloor$. We also have
        \begin{align*}
			C_k&=\frac{2}{j}\left(\frac{1}{k}-1\right)\cos\left(\frac{j\pi}{k-1}\right)\left(\frac{\sin\left(2\left(\left\lfloor\frac{\rho(k-1)}{2}\right\rfloor+r\right)\frac{j\pi}{k-1}\right)}{2\sin\left(\frac{j\pi}{k-1}\right)}\right)\\
			&=\frac{1}{j}\left(\frac{1}{k}-1\right)\frac{\sin\left(\rho j\pi-\frac{(2s_k-2r-1)j\pi}{k-1}\right)}{\sin\left(\frac{j\pi}{k-1}\right)}-\frac{1}{j}\left(\frac{1}{k}-1\right)\cos\left(\rho j\pi-\frac{2(s_k-r)j\pi}{k-1}\right).
		\end{align*}
        Notice that there are finitely many possible values of $s_k$ due to the rationality of $\rho$.
Thus using Corollary \ref{cor2.2} and Lemma \ref{lem2.4}, we yield
\begin{align*}
    &\sum_{l=1}^{\left\lfloor\frac{\rho(k-1)}{2}\right\rfloor+r}\left(\int_{\frac{2(l-1)\pi}{k-1}}^{\frac{(2l-1)\pi}{k+1}}f_{j,k}(x)dx-\int_{\frac{(2l-1)\pi}{k+1}}^{\frac{2l\pi}{k-1}}f_{j,k}(x)dx\right)\\
    &\qquad=\frac{1}{j}\left(\frac{1}{k}-1\right)\frac{\sin\left(\rho j\pi-\frac{(2s_k-2r-1)j\pi}{k-1}\right)}{\sin\left(\frac{j\pi}{k-1}\right)}+\frac{1}{j}\left(\frac{1}{k}+1\right)\frac{\sin\left(\rho j\pi-\frac{2(\rho+s_k-r)j\pi}{k+1}\right)}{\sin\left(\frac{j\pi}{k+1}\right)}\\
				&\qquad\qquad-\frac{1}{j}\left(\frac{1}{k}-1\right)\cos\left(\rho j\pi-\frac{2(s_k-r)j\pi}{k-1}\right)\\
    &\qquad\to\frac{1}{j}\left(\frac{4}{j\pi}\sin(\rho j\pi)-\left(\frac{-(2s_k-2r-1)j\pi+2(\rho+s_k-r)j\pi}{j\pi}\right)\cos(\rho j\pi)\right)+\frac{1}{j}\cos(\rho j\pi)
\end{align*}
			as $k\to\infty$. By further simplification, we are done.
            \hfill $\square$

\hfill

\noindent
{\bf Proof of Lemma \ref{lem2.5} (even $j$ case).} For $l\in\mathbb{N}$, the fundamental theorem of calculus and the angle addition formula tell us that
\begin{align*}
				\int_{\frac{(l-1)\pi}{k-1}}^{\frac{l\pi}{k+1}}g_{j,k}(x)dx
				&=-\frac{1}{j}\left(\frac{1}{k}+1\right)\sin\left(\frac{jl\pi}{k+1}\right)-\frac{1}{j}\left(\frac{1}{k}-1\right)\sin\left(\frac{j(l-1)\pi}{k-1}\right)
			\end{align*}
and
    \begin{align*}
				\int_{\frac{l\pi}{k+1}}^{\frac{l\pi}{k-1}}g_{j,k}(x)dx
				=\frac{1}{j}\left(\frac{1}{k}-1\right)\sin\left(\frac{jl\pi}{k-1}\right)+\frac{1}{j}\left(\frac{1}{k}+1\right)\sin\left(\frac{jl\pi}{k+1}\right).
			\end{align*}
Hence we can write
			\[\sum_{l=1}^{\lfloor\rho(k-1)\rfloor+r}\left(\int_{\frac{(l-1)\pi}{k-1}}^{\frac{l\pi}{k+1}}g_{j,k}(x)dx-\int_{\frac{l\pi}{k+1}}^{\frac{l\pi}{k-1}}g_{j,k}(x)dx\right)=F_k+G_k,\]
			where
			\[F_k:=-\frac{2}{j}\left(\frac{1}{k}+1\right)\sum_{l=1}^{\lfloor\rho(k-1)\rfloor+r}\sin\left(\frac{jl\pi}{k+1}\right)\]
			and
			\begin{align*}
				G_k&:=-\frac{1}{j}\left(\frac{1}{k}-1\right)\sum_{l=1}^{\lfloor\rho(k-1)\rfloor+r}\left(\sin\left(\frac{j(l-1)\pi}{k-1}\right)+\sin\left(\frac{jl\pi}{k-1}\right)\right)\\
				&=-\frac{2}{j}\left(\frac{1}{k}-1\right)\cos\left(\frac{j\pi}{2(k-1)}\right)\sum_{l=1}^{\lfloor\rho(k-1)\rfloor+r}\sin\left(\frac{j(2l-1)\pi}{2(k-1)}\right).
			\end{align*}
So by applying Lemma \ref{lem2.3}, we may simplify $F_k$ and $G_k$ to
\begin{align*}
				F_k&=-\frac{2}{j}\left(\frac{1}{k}+1\right)\left(\frac{\cos\left(\frac{j\pi}{2(k+1)}\right)-\cos\left(\left(\lfloor\rho(k-1)\rfloor+r+\frac{1}{2}\right)\frac{j\pi}{k+1}\right)}{2\sin\left(\frac{j\pi}{2(k+1)}\right)}\right)\\
				&=-\frac{1}{j}\left(\frac{1}{k}+1\right)\left(\cot\left(\frac{j\pi}{2(k+1)}\right)-\frac{\cos\left(\rho j\pi-\frac{\left(4\rho+2v_k-2r-1\right)j\pi}{2(k+1)}\right)}{\sin\left(\frac{j\pi}{2(k+1)}\right)}\right),\\
				G_k&=-\frac{2}{j}\left(\frac{1}{k}-1\right)\cos\left(\frac{j\pi}{2(k-1)}\right)\left(\frac{1-\cos\left((\lfloor\rho(k-1)\rfloor+r)\frac{j\pi}{k-1}\right)}{2\sin\left(\frac{j\pi}{2(k-1)}\right)}\right)\\
				&=-\frac{1}{j}\left(\frac{1}{k}-1\right)\left(\cot\left(\frac{j\pi}{2(k-1)}\right)-\frac{\cos\left(\rho j\pi-\frac{(2v_k-2r-1)j\pi}{2(k-1)}\right)}{\sin\left(\frac{j\pi}{2(k-1)}\right)}\right)\\
				&\qquad+\frac{1}{j}\left(\frac{1}{k}-1\right)\sin\left(\rho j\pi-\frac{(v_k-r)j\pi}{k-1}\right).
			\end{align*}
Here, $v_k:=\rho(k-1)-\lfloor\rho(k-1)\rfloor$. Finally, by applying Corollary \ref{cor2.2} and Lemma \ref{lem2.4} under the $F_k$ and $G_k$ we have established, the proof is complete.\hfill $\square$

\begin{lemma}\label{newlem2.6}
    Let $\rho\in(0, 1/2]\cap\mathbb{Q}$ and $r\in\{0, 1\}$. If $j$ is odd, then
        \begin{align*}
            &\lim_{k\to\infty}\sum_{l=1}^{\left\lfloor \frac{\rho(k-1)}{2}\right\rfloor+r}\left(\int_{\frac{2l\pi}{k+1}}^{\frac{2l\pi}{k-1}}g_{j, k}(x)dx-\int_{\frac{2(l-1)\pi}{k-1}}^{\frac{2l\pi}{k+1}}g_{j, k}(x)dx\right)\\
            &\qquad=\frac{4}{\pi j^2}(1-\cos(\rho j\pi))-\frac{(2\rho-1)}{j}\sin(\rho j\pi).
        \end{align*}
\end{lemma}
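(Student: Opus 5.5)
The plan is to mirror the proof of Lemma \ref{lem2.5}. I will evaluate each integral in closed form, telescope the sum into two finite trigonometric sums, sum these with Lemma \ref{lem2.3}, and then pass to the limit using Corollary \ref{cor2.2} and Lemma \ref{lem2.4}.

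\textbf{Step 1: closed forms.} An antiderivative of $g_{j,k}$ is
\[
G(x)=\frac{\sin(jkx)}{jk}-\frac{\sin(jx)}{j}.
\]
Write $a_l:=\frac{2l\pi}{k-1}$ and $b_l:=\frac{2l\pi}{k+1}$. Since $jk\,b_l=2jl\pi-jb_l$ and $jk\,a_l=2jl\pi+ja_l$, we get
\[
G(b_l)=-\frac1j\left(\frac1k+1\right)\sin(jb_l),\qquad G(a_l)=\frac1j\left(\frac1k-1\right)\sin(ja_l).
\]
The $l$-th summand equals $(G(a_l)-G(b_l))-(G(b_l)-G(a_{l-1}))=G(a_l)+G(a_{l-1})-2G(b_l)$. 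With $N_k:=\lfloor\rho(k-1)/2\rfloor+r$, the whole sum is $P_k+Q_k$, where
\begin{align*}
P_k&:=\frac2j\left(\frac1k+1\right)\sum_{l=1}^{N_k}\sin\left(\frac{2jl\pi}{k+1}\right),\\
Q_k&:=\frac2j\left(\frac1k-1\right)\cos\left(\frac{j\pi}{k-1}\right)\sum_{l=1}^{N_k}\sin\left(\frac{j(2l-1)\pi}{k-1}\right).
\end{align*}
For $Q_k$ I use the sum-to-product formula exactly as for $C_k$ in Lemma \ref{lem2.5}.

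\textbf{Step 2: summation.} Apply Lemma \ref{lem2.3}(3) to $P_k$ with $\alpha=0$ and $\theta=\frac{2j\pi}{k+1}$. Apply Lemma \ref{lem2.3}(4) to $Q_k$ with $\theta=\frac{j\pi}{k-1}$. Both angles are nonzero and small for large $k$, so the lemma applies. Put $s_k:=\rho(k-1)/2-\lfloor\rho(k-1)/2\rfloor$, so that $2N_k=\rho(k-1)-2s_k+2r$. Then every argument of the form $(2N_k+1)\theta$ or $2N_k\theta$ becomes $\rho j\pi$ plus an explicit $\mathcal{O}(1/k)$ shift. The $k+1$ shift is $-\frac{(2\rho+2s_k-2r-1)j\pi}{k+1}$, and the $k-1$ shift involves $(2s_k-2r)$. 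This yields:
\begin{itemize}
\item a pair of $\cot$-type terms $\frac{\cos(\text{small})}{\sin(\text{small})}$ with $\gamma=0$;
\item a pair of terms $\frac{\cos(\rho j\pi-\text{small})}{\sin(\text{small})}$ with $\gamma=\rho j\pi$;
\item a leftover bounded term from the $\cos(\frac{j\pi}{k-1})$ factor, handled as in $C_k$. It contributes a limit proportional to $\sin(\rho j\pi)$.
\end{itemize}
The weights $\left(\frac1k\pm1\right)$ are already arranged as in Corollary \ref{cor2.2}.

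\textbf{Step 3: limit.} Since $\rho$ is rational, $s_k$ takes finitely many values, each on an arithmetic progression of $k$'s. Along each such class, Corollary \ref{cor2.2} applies:
\begin{itemize}
\item the $\gamma=0$ pair gives $\frac{4}{\pi j^2}$;
\item the $\gamma=\rho j\pi$ pair gives $-\frac{4}{\pi j^2}\cos(\rho j\pi)$ plus a multiple of $\sin(\rho j\pi)$ coming from the difference of the $\alpha$-shifts.
\end{itemize}
In the final total, $s_k$ and $r$ cancel in the shift differences, just as they did in Lemma \ref{lem2.5}. This leaves $\frac{4}{\pi j^2}(1-\cos(\rho j\pi))-\frac{2\rho-1}{j}\sin(\rho j\pi)$. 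Since the limit is the same along every class, Lemma \ref{lem2.4} upgrades it to the full limit.

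The main obstacle is the bookkeeping of the $\mathcal{O}(1/k)$ phase shifts in Step 2. Their differences, via the $\frac{\alpha_1-\alpha_2}{\beta}\sin\gamma$ term of Corollary \ref{cor2.2}, together with the extra bounded term from $Q_k$, are exactly what produce the coefficient $-(2\rho-1)/j$. I will verify them carefully and check that the dependence on $s_k$ and $r$ cancels in the final total.
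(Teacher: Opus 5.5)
Your proposal is correct and follows the paper's proof essentially line for line: your $P_k$ and $Q_k$ are exactly the paper's $B_k$ and $C_k$, summed via Lemma~\ref{lem2.3}(3),(4), with Corollary~\ref{cor2.2} applied along the finitely many residue classes of $k$ on which $s_k$ is constant and Lemma~\ref{lem2.4} assembling the full limit. The bookkeeping you defer does work out. The $\gamma=0$ pair gives $\frac{4}{\pi j^2}$. The $\gamma=\rho j\pi$ pair has shift difference $2\rho j$, free of $s_k$ and $r$, and gives $-\frac{4}{\pi j^2}\cos(\rho j\pi)-\frac{2\rho}{j}\sin(\rho j\pi)$. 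The leftover term from $Q_k$ gives $+\frac{1}{j}\sin(\rho j\pi)$.
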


\noindent
{\bf Proof.} Firstly, for $l\in\mathbb{N}$, using the fundamental theorem of calculus and the angle addition formula, we yield
\begin{align*}
    \int_{\frac{2l\pi}{k+1}}^{\frac{2l\pi}{k-1}}g_{j, k}(x)dx=\frac{1}{j}\left(\frac{1}{k}-1\right)\sin\left(\frac{2jl\pi}{k-1}\right)+\frac{1}{j}\left(\frac{1}{k}+1\right)\sin\left(\frac{2jl\pi}{k+1}\right)
\end{align*}
and
\begin{align*}
    \int_{\frac{2(l-1)\pi}{k-1}}^{\frac{2l\pi}{k+1}}g_{j, k}(x)dx=-\frac{1}{j}\left(\frac{1}{k}+1\right)\sin\left(\frac{2jl\pi}{k+1}\right)-\frac{1}{j}\left(\frac{1}{k}-1\right)\sin\left(\frac{2j(l-1)\pi}{k-1}\right).
\end{align*}
Then we can write
\begin{align*}
    \sum_{l=1}^{\left\lfloor \frac{\rho(k-1)}{2}\right\rfloor+r}\left(\int_{\frac{2l\pi}{k+1}}^{\frac{2l\pi}{k-1}}g_{j, k}(x)dx-\int_{\frac{2(l-1)\pi}{k-1}}^{\frac{2l\pi}{k+1}}g_{j, k}(x)dx\right)=B_k+C_k,
\end{align*}
where
    \[B_k:=\frac{2}{j}\left(\frac{1}{k}+1\right)\sum_{l=1}^{\left\lfloor\frac{\rho(k-1)}{2}\right\rfloor+r}\sin\left(\frac{2jl\pi}{k+1}\right)\]
and
    \begin{align*}
        C_k&:=\frac{1}{j}\left(\frac{1}{k}-1\right)\sum_{l=1}^{\left\lfloor\frac{\rho(k-1)}{2}\right\rfloor+r}\left(\sin\left(\frac{2jl\pi}{k-1}\right)+\sin\left(\frac{2j(l-1)\pi}{k-1}\right)\right)\\
        &=\frac{2}{j}\left(\frac{1}{k}-1\right)\cos\left(\frac{j\pi}{k-1}\right)\sum_{l=1}^{\left\lfloor\frac{\rho(k-1)}{2}\right\rfloor+r}\sin\left(\frac{j(2l-1)\pi}{k-1}\right).
    \end{align*}
Next, we can simplify $B_k$ and $C_k$ using Lemma \ref{lem2.3} as follows.
\begin{align*}
    B_k&=\frac{2}{j}\left(\frac{1}{k}+1\right)\left(\frac{\cos\left(\frac{j\pi}{k+1}\right)-\cos\left(\left(\left\lfloor \frac{\rho(k-1)}{2}\right\rfloor+r+\frac{1}{2}\right)\frac{2j\pi}{k+1}\right)}{2\sin\left(\frac{j\pi}{k+1}\right)}\right)\\
    &=\frac{1}{j}\left(\frac{1}{k}+1\right)\left(\cot\left(\frac{j\pi}{k+1}\right)-\frac{\cos\left(\rho j\pi-\frac{(2\rho+2s_k-2r-1)j\pi}{k+1}\right)}{\sin\left(\frac{j\pi}{k+1}\right)}\right).\\
    C_k&=\frac{2}{j}\left(\frac{1}{k}-1\right)\cos\left(\frac{j\pi}{k-1}\right)\left(\frac{1-\cos\left(\left(\left\lfloor\frac{\rho(k-1)}{2}\right\rfloor+r\right)\frac{2j\pi}{k-1}\right)}{2\sin\left(\frac{j\pi}{k-1}\right)}\right)\\
    &=\frac{1}{j}\left(\frac{1}{k}-1\right)\left(\cot\left(\frac{j\pi}{k-1}\right)-\frac{\cos\left(\rho j\pi-\frac{(2s_k-2r-1)j\pi}{k-1}\right)}{\sin\left(\frac{j\pi}{k-1}\right)}\right)\\
    &\qquad-\frac{1}{j}\left(\frac{1}{k}-1\right)\sin\left(\rho j\pi-\frac{(2s_k-2r)j\pi}{k-1}\right).
\end{align*}
Here, $s_k=\rho(k-1)/2-\lfloor\rho(k-1)/2\rfloor$. Lastly, by using Corollary \ref{cor2.2} and Lemma \ref{lem2.4}, we yield the desired result. \hfill $\square$


\begin{lemma}\label{lem2.6}
Let $\rho\in(1/2, 1]\cap\mathbb{Q}$, $j$ be an odd number, and $r\in\{0, 1\}$.  Then
\begin{align*}
		&\lim_{k\to\infty}\sum_{l=1}^{\left\lfloor\frac{\rho(k+1)+1}{2}\right\rfloor-\left\lceil\frac{k+3}{4}\right\rceil+r}\left(\int_{\left(2l+2\left\lceil\frac{k+3}{4}\right\rceil-3\right)\frac{\pi}{k+1}}^{(2l+2\left\lceil\frac{k+3}{4}\right\rceil-4)\frac{\pi}{k-1}}f_{j,k}(x)dx-\int_{\left(2l+2\left\lceil\frac{k+3}{4}\right\rceil-4\right)\frac{\pi}{k-1}}^{\left(2l+2\left\lceil\frac{k+3}{4}\right\rceil-1\right)\frac{\pi}{k+1}}f_{j,k}(x)dx\right)\\
				&\qquad=\frac{4}{\pi j^2}\left((-1)^{\frac{j-1}{2}}-\sin(\rho j\pi)\right)-\frac{2(1-\rho)}{j}\cos(\rho j\pi)
                \end{align*}
    and
    \begin{align*}
		&\lim_{k\to\infty}\sum_{l=1}^{\left\lfloor\frac{\rho(k+1)}{2}\right\rfloor-\left\lceil\frac{k+1}{4}\right\rceil+r}\left(\int_{\left(2l+2\left\lceil\frac{k+1}{4}\right\rceil-2\right)\frac{\pi}{k+1}}^{(2l+2\left\lceil\frac{k+1}{4}\right\rceil-2)\frac{\pi}{k-1}}g_{j,k}(x)dx-\int_{\left(2l+2\left\lceil\frac{k+1}{4}\right\rceil-2\right)\frac{\pi}{k-1}}^{\left(2l+2\left\lceil\frac{k+1}{4}\right\rceil\right)\frac{\pi}{k+1}}g_{j,k}(x)dx\right)\\
				&\qquad=-\frac{4}{\pi j^2}\cos(\rho j\pi)+\frac{(1-2\rho)}{j}\sin(\rho j\pi).
                \end{align*}
\end{lemma}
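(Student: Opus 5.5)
The plan mirrors the proofs of Lemma \ref{lem2.5} (odd $j$) and Lemma \ref{newlem2.6}: integrate each piece in closed form, collapse the sum over $l$ into a sum of cosines or sines along an arithmetic progression, evaluate that with Lemma \ref{lem2.3}, and pass to the limit using Corollary \ref{cor2.2} and Lemma \ref{lem2.4}. The new ingredient is the shifted starting index $q_k:=\lceil (k+3)/4\rceil$ (resp. $\lceil (k+1)/4\rceil$). Its fractional offset $t_k:=q_k-(k+3)/4\in\{0,1/4,1/2,3/4\}$ takes finitely many values, determined by $k \bmod 4$. Since $\rho$ is rational, the upper limit also produces an offset $s_k$ with finitely many values. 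So $\mathbb{N}$ splits into finitely many residue classes, and on each class all offsets are constant. Lemma \ref{lem2.4} then reduces the claim to proving the limit along each class.

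\textbf{First identity.} For $m:=2l+2q_k$, a direct antiderivative computation gives
\[\int_{(m-3)\frac{\pi}{k+1}}^{(m-4)\frac{\pi}{k-1}}f_{j,k}\,dx=\frac1j\left(\frac1k-1\right)\cos\left(\frac{j(m-4)\pi}{k-1}\right)+\frac1j\left(\frac1k+1\right)\cos\left(\frac{j(m-3)\pi}{k+1}\right).\]
The analogous formula holds for the second integral, with $(m-1)$ replacing $(m-3)$ in the $k+1$ term. Subtracting and summing over $l$ splits the expression into two parts:
\begin{itemize}
\item a $(k-1)$-part, $\frac{2}{j}(\frac1k-1)\sum_l\cos\bigl(\frac{j(m-4)\pi}{k-1}\bigr)$;
\item a $(k+1)$-part, $\frac1j(\frac1k+1)\sum_l\bigl(\cos\frac{j(m-3)\pi}{k+1}-\cos\frac{j(m-1)\pi}{k+1}\bigr)$.
\end{itemize}
The $(k+1)$-part telescopes (consecutive $l$ shift $m$ by $2$), leaving only boundary terms. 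Alternatively one can use sum-to-product and Lemma \ref{lem2.3}(4). The $(k-1)$-part is a cosine sum with step $2j\pi/(k-1)$, which Lemma \ref{lem2.3}(1)/(2) turns into a difference of sines over $2\sin(j\pi/(k-1))$.

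\textbf{Rewriting the arguments.} Write the arguments as a fixed angle plus $\mathcal{O}(1/k)$ corrections:
\begin{itemize}
\item the lower endpoint $2q_k\pi/(k\pm1)$ becomes $\pi/2+(\text{const depending on }t_k)\cdot\pi/(k\pm1)$;
\item the upper endpoint becomes $\rho\pi+(\text{const depending on }s_k)\cdot\pi/(k\pm 1)$.
\end{itemize}
Multiplied by odd $j$, the lower endpoint contributes $\sin(j\pi/2)=(-1)^{(j-1)/2}$ and $\cos(j\pi/2)=0$. The resulting expressions have exactly the shape $(\frac1k\mp1)\cos(\frac{\alpha x}{k\mp1}+\gamma)/\sin(\frac{\beta x}{k\mp1})$ required by Corollary \ref{cor2.2}, with $\gamma\in\{\rho j\pi, j\pi/2\}$ and $x=\pi$. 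Their limits are $\frac{4}{\beta\pi}\cos\gamma+\frac{\alpha_1-\alpha_2}{\beta}\sin\gamma$-type terms. Bounded extra terms, such as the $\cos(\dots)$ leftover from sum-to-product, converge directly.

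\textbf{Second identity and main obstacle.} The $g_{j,k}$ identity is handled identically with sines, using Lemma \ref{lem2.3}(3)/(4), exactly as in Lemma \ref{newlem2.6}. Here the lower boundary gives $\cos(j\pi/2)=0$, which is why no constant term appears. The main obstacle is bookkeeping: checking that the $t_k$- and $s_k$-dependent offsets in $\alpha_1-\alpha_2$ cancel, so that the limit is independent of the residue class. The answers suggest this works. The $\frac{4}{\pi j^2}$ terms come from the $4/(\beta x)$ pieces with $\beta=j$, $x=\pi$. The coefficient $2(1-\rho)$ (resp. $1-2\rho$) should arise as the $\alpha$-difference, namely the difference in length of the interval $[\pi/2,\rho\pi]$ measured in the two grids. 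I would verify this cancellation first in the case $k\equiv 1\pmod 4$, where $t_k=0$, and then note that shifting $t_k$ changes $\alpha_1$ and $\alpha_2$ by equal amounts.
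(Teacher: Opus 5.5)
Your overall plan is the paper's: closed-form antiderivatives, Lemma \ref{lem2.3} for the progression sums, then Corollary \ref{cor2.2} and Lemma \ref{lem2.4} over the finitely many offset classes. However, the decomposition you actually wrote down is wrong, and it breaks the argument.

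Take $F(x)=-\frac{\cos(jkx)}{jk}+\frac{\cos(jx)}{j}$. At the $(k-1)$-grid point $(m-4)\pi/(k-1)$ one has $\cos(jkx)=\cos(jx)$. At the $(k+1)$-grid points $(m-3)\pi/(k+1)$ and $(m-1)\pi/(k+1)$ one has $\cos(jkx)=-\cos(jx)$. Here the first integral runs \emph{from} a $(k+1)$-point \emph{to} a $(k-1)$-point, the reverse of Lemma \ref{lem2.5}. So its value is the negative of your formula.

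More importantly, set $x_1=(m-4)\frac{\pi}{k-1}$, $x_2=(m-3)\frac{\pi}{k+1}$, $x_3=(m-1)\frac{\pi}{k+1}$. Each summand equals $2F(x_1)-F(x_2)-F(x_3)$, so the two $(k+1)$-endpoints enter with the \emph{same} sign:
\[
-\frac2j\Bigl(\frac1k-1\Bigr)\cos\frac{j(m-4)\pi}{k-1}-\frac1j\Bigl(\frac1k+1\Bigr)\Bigl(\cos\frac{j(m-3)\pi}{k+1}+\cos\frac{j(m-1)\pi}{k+1}\Bigr).
\]
The $(k+1)$-part is therefore a sum, not a difference, and it does not telescope. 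It must be rewritten as $-\frac2j(\frac1k+1)\cos\frac{j\pi}{k+1}\sum_l\cos\frac{j(m-2)\pi}{k+1}$ and summed with Lemma \ref{lem2.3}(2). Your two displayed closed forms are also inconsistent with your own split: subtracting them, the $(k-1)$-terms cancel rather than double.

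This is where the proof lives. After Lemma \ref{lem2.3}(2), the $(k-1)$-part carries a factor $1/\sin\frac{j\pi}{k-1}$ and is asymptotic to $\frac{k}{\pi j^2}\bigl(\sin(\rho j\pi)-(-1)^{\frac{j-1}{2}}\bigr)$. This diverges for generic $\rho\in(1/2,1]$; for $j=1$, $\rho=1$ it is about $-k/\pi$. It can only be cancelled by an equally large $(k+1)$-part, with the opposite-signed prefactor $\frac1k+1$ and denominator $\sin\frac{j\pi}{k+1}$. That pairing is exactly what Corollary \ref{cor2.2} evaluates. A telescoped $(k+1)$-part is bounded, so the divergence survives. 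Corollary \ref{cor2.2} also cannot be invoked, because only one of its two quotients is present.

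Once you use $\cos A+\cos B=2\cos\frac{A+B}{2}\cos\frac{A-B}{2}$, everything goes through as in the paper:
\begin{itemize}
\item The lower endpoint gives $\frac{4}{\pi j^2}(-1)^{\frac{j-1}{2}}$.
\item The upper endpoint, via Corollary \ref{cor2.2} with $\gamma=\rho j\pi$ and $\alpha_1-\alpha_2=(2\rho-1)j$, gives $-\frac{4}{\pi j^2}\sin(\rho j\pi)+\frac{2\rho-1}{j}\cos(\rho j\pi)$.
\item The bounded leftover $-\frac1j(\frac1k+1)\cos\bigl(\rho j\pi+\mathcal{O}(1/k)\bigr)$ from the sum-to-product adds $-\frac1j\cos(\rho j\pi)$.
\end{itemize}
So the coefficient $2(1-\rho)$ is not simply the $\alpha$-difference, as you guessed. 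The $g_{j,k}$ identity has the same $2G(y_1)-G(y_2)-G(y_3)$ structure. It needs the analogous sum-of-sines-to-product step, not telescoping.
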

\noindent
{\bf Proof of Lemma \ref{lem2.6} (first equation).} 
 By the fundamental theorem of calculus and the angle addition formula, we first have
\begin{align*}
				&\int_{\left(2\left\lceil\frac{k+3}{4}\right\rceil+2l-3\right)\frac{\pi}{k+1}}^{\left(2\left\lceil\frac{k+3}{4}\right\rceil+2l-4\right)\frac{\pi}{k-1}}f_{j,k}(x)dx\\
				&\qquad=-\frac{1}{j}\left(\frac{1}{k}-1\right)\cos\left(\frac{j\pi}{k-1}\left(2\left\lceil\frac{k+3}{4}\right\rceil+2l-4\right)\right)\\
				&\qquad\qquad-\frac{1}{j}\left(\frac{1}{k}+1\right)\cos\left(\frac{j\pi}{k+1}\left(2\left\lceil\frac{k+3}{4}\right\rceil+2l-3\right)\right)
			\end{align*}
and 
\begin{align*}
    &\int_{\left(2\left\lceil\frac{k+3}{4}\right\rceil+2l-4\right)\frac{\pi}{k-1}}^{\left(2\left\lceil\frac{k+3}{4}\right\rceil+2l-1\right)\frac{\pi}{k+1}}f_{j,k}(x)dx\\
    &\qquad=\frac{1}{j}\left(\frac{1}{k}+1\right)\cos\left(\frac{j\pi}{k+1}\left(2\left\lceil\frac{k+3}{4}\right\rceil+2l-1\right)\right)\\
				&\qquad\qquad+\frac{1}{j}\left(\frac{1}{k}-1\right)\cos\left(\frac{j\pi}{k-1}\left(2\left\lceil\frac{k+3}{4}\right\rceil+2l-4\right)\right).
\end{align*}
So we can write
\begin{align}\label{oldDE}
    \sum_{l=1}^{\alpha}\left(\int_{\left(2\left\lceil\frac{k+3}{4}\right\rceil+2l-3\right)\frac{\pi}{k+1}}^{(2\left\lceil\frac{k+3}{4}\right\rceil+2l-4)\frac{\pi}{k-1}}f_{j,k}(x)dx-\int_{\left(2\left\lceil\frac{k+3}{4}\right\rceil+2l-4\right)\frac{\pi}{k-1}}^{\left(2\left\lceil\frac{k+3}{4}\right\rceil+2l-1\right)\frac{\pi}{k+1}}f_{j,k}(x)dx\right)=D_k+E_k,
\end{align}
where
\begin{align*}
D_k&:=-\frac{2}{j}\left(\frac{1}{k}-1\right)\sum_{l=1}^{\alpha}\cos\left(\frac{j\pi}{k-1}\left(2\left\lceil\frac{k+3}{4}\right\rceil+2l-4\right)\right),\\
				E_k&:=-\frac{2}{j}\left(\frac{1}{k}+1\right)\cos\left(\frac{j\pi}{k+1}\right)\sum_{l=1}^{\alpha}\cos\left(\frac{j\pi}{k+1}\left(2\left\lceil\frac{k+3}{4}\right\rceil+2l-2\right)\right),
			\end{align*}
            \[\alpha:=\left\lfloor\frac{\rho(k+1)+1}{2}\right\rfloor-\left\lceil\frac{k+3}{4}\right\rceil+r=\frac{(2\rho-1)(k+1)}{4}-u_k-t_k+r.\]
Here, we define
\[u_k:=\frac{\rho(k+1)+1}{2}-\left\lfloor\frac{\rho(k+1)+1}{2}\right\rfloor\quad\text{and}\quad t_k:=\left\lceil\frac{k+3}{4}\right\rceil-\frac{k+3}{4}.\]
Using Lemma \ref{lem2.3} and some algebraic manipulation, we see that
\begin{align*}
				&\sum_{l=1}^\alpha\cos\left(\frac{j\pi}{k-1}\left(2\left\lceil\frac{k+3}{4}\right\rceil+2l-4\right)\right)\\
				&\qquad=\sum_{l=1}^\alpha\cos\left(\frac{j\pi}{k-1}\left(2\left\lceil\frac{k+3}{4}\right\rceil-3\right)+(2l-1)\frac{j\pi}{k-1}\right)\\
				&\qquad=\frac{\sin\left(\frac{j\pi}{k-1}\left(2\left\lceil\frac{k+3}{4}\right\rceil-3\right)+\frac{2\alpha j\pi}{k-1}\right)-\sin\left(\frac{j\pi}{k-1}\left(2\left\lceil\frac{k+3}{4}\right\rceil-3\right)\right)}{2\sin\left(\frac{j\pi}{k-1}\right)}\\
				&\qquad=\frac{\sin\left(\rho j\pi+\frac{(2\rho-2u_k+2r-2)j\pi}{k-1}\right)-(-1)^{\frac{j-1}{2}}\cos\left(\frac{(2t_k-1)j\pi}{k-1}\right)}{2\sin\left(\frac{j\pi}{k-1}\right)},
			\end{align*}
and similarly
\begin{align*}
				&\sum_{l=1}^\alpha\cos\left(\frac{j\pi}{k+1}\left(2\left\lceil\frac{k+3}{4}\right\rceil+2l-2\right)\right)\\
                &\qquad=\frac{\sin\left(\rho j\pi+\frac{(-2u_k+2r)j\pi}{k+1}\right)-(-1)^{\frac{j-1}{2}}\cos\left(\frac{2t_kj\pi}{k+1}\right)}{2\sin\left(\frac{j\pi}{k+1}\right)}.
			\end{align*}
Hence we can simplify $D_k$ and $E_k$ as follows.

\begin{align*}
D_k&=-\frac{1}{j}\left(\frac{1}{k}-1\right)\left(\frac{\sin\left(\rho j\pi+\frac{(2\rho-2u_k+2r-2)j\pi}{k-1}\right)-(-1)^{\frac{j-1}{2}}\cos\left(\frac{(2t_k-1)j\pi}{k-1}\right)}{\sin\left(\frac{j\pi}{k-1}\right)}\right),\\
				E_k
				&=-\frac{1}{j}\left(\frac{1}{k}+1\right)\left(\frac{\sin\left(\rho j\pi+\frac{(-2u_k+2r-1)j\pi}{k+1}\right)-(-1)^{\frac{j-1}{2}}\cos\left(\frac{(2t_k+1)j\pi}{k+1}\right)}{\sin\left(\frac{j\pi}{k+1}\right)}\right)\\
				&\qquad-\frac{1}{j}\left(\frac{1}{k}+1\right)\left(\cos\left(\rho j\pi+\frac{(-2u_k+2r)j\pi}{k+1}\right)-(-1)^{\frac{j-1}{2}}\sin\left(\frac{2t_kj\pi}{k+1}\right)\right).
			\end{align*}
Thus, applying Corollary \ref{cor2.2} and Lemma \ref{lem2.4} to \eqref{oldDE} on the basis of $D_k$ and $E_k$ we have found, the proof is complete.\hfill $\square$

\hfill

\noindent
{\bf Proof of Lemma \ref{lem2.6} (second equation).} First, we have 
\begin{align*}
				&\int_{\left(2l+2\left\lceil\frac{k+1}{4}\right\rceil-2\right)\frac{\pi}{k+1}}^{\left(2l+2\left\lceil\frac{k+1}{4}\right\rceil-2\right)\frac{\pi}{k-1}}g_{j,k}(x)dx\\
				&\qquad=\frac{1}{j}\left(\frac{1}{k}-1\right)\sin\left(\frac{j\pi}{k-1}\left(2\left\lceil\frac{k+1}{4}\right\rceil+2l-2\right)\right)\\
				&\qquad\qquad+\frac{1}{j}\left(\frac{1}{k}+1\right)\sin\left(\frac{j\pi}{k+1}\left(2l+2\left\lceil\frac{k+1}{4}\right\rceil-2\right)\right)
			\end{align*}
and 
\begin{align*}
    &\int_{\left(2l+2\left\lceil\frac{k+1}{4}\right\rceil-2\right)\frac{\pi}{k-1}}^{\left(2l+2\left\lceil\frac{k+1}{4}\right\rceil\right)\frac{\pi}{k+1}}g_{j,k}(x)dx\\
    &\qquad=-\frac{1}{j}\left(\frac{1}{k}+1\right)\sin\left(\frac{j\pi}{k+1}\left(2l+2\left\lceil\frac{k+1}{4}\right\rceil\right)\right)\\
				&\qquad\qquad-\frac{1}{j}\left(\frac{1}{k}-1\right)\sin\left(\frac{j\pi}{k-1}\left(2l+2\left\lceil\frac{k+1}{4}\right\rceil-2\right)\right).
\end{align*}
Thus we can write
\begin{align}\label{DE}
    \sum_{l=1}^{\beta}\left(\int_{\left(2l+2\left\lceil\frac{k+1}{4}\right\rceil-2\right)\frac{\pi}{k+1}}^{(2l+2\left\lceil\frac{k+1}{4}\right\rceil-2)\frac{\pi}{k-1}}g_{j,k}(x)dx-\int_{\left(2l+2\left\lceil\frac{k+1}{4}\right\rceil-2\right)\frac{\pi}{k-1}}^{\left(2l+2\left\lceil\frac{k+1}{4}\right\rceil\right)\frac{\pi}{k+1}}g_{j,k}(x)dx\right)=D_k+E_k,
\end{align}
where
\begin{align*}
D_k&:=\frac{2}{j}\left(\frac{1}{k}-1\right)\sum_{l=1}^{\beta}\sin\left(\frac{j\pi}{k-1}\left(2l+2\left\lceil\frac{k+1}{4}\right\rceil-2\right)\right),\\
				E_k&:=\frac{2}{j}\left(\frac{1}{k}+1\right)\cos\left(\frac{j\pi}{k+1}\right)\sum_{l=1}^{\beta}\sin\left(\frac{j\pi}{k+1}\left(2l+2\left\lceil\frac{k+1}{4}\right\rceil-1\right)\right),
			\end{align*}
            \[\beta:=\left\lfloor\frac{\rho(k+1)}{2}\right\rfloor-\left\lceil\frac{k+1}{4}\right\rceil+r=\frac{(2\rho-1)(k+1)}{4}-u_k-t_k+r.\]
Here, we define
\[u_k:=\frac{\rho(k+1)}{2}-\left\lfloor\frac{\rho(k+1)}{2}\right\rfloor\quad\text{and}\quad t_k:=\left\lceil\frac{k+1}{4}\right\rceil-\frac{k+1}{4}.\]
Using Lemma \ref{lem2.3} and some algebraic manipulation, we have
\begin{align*}
				&\sum_{l=1}^\beta\sin\left(\frac{j\pi}{k-1}\left(2l+2\left\lceil\frac{k+1}{4}\right\rceil-2\right)\right)\\
                &\qquad=\frac{(-1)^{\frac{j+1}{2}}\sin\left(\frac{2t_kj\pi}{k-1}\right)-\cos\left(\rho j\pi+\frac{(2\rho-2u_k+2r-1)j\pi}{k-1}\right)}{2\sin\left(\frac{j\pi}{k-1}\right)}
			\end{align*}
and 
\begin{align*}
				&\sum_{l=1}^\beta\sin\left(\frac{j\pi}{k+1}\left(2l+2\left\lceil\frac{k+1}{4}\right\rceil-1\right)\right)\\
                &\qquad=\frac{(-1)^{\frac{j+1}{2}}\sin\left(\frac{2t_kj\pi}{k+1}\right)-\cos\left(\rho j\pi-\frac{(2u_k-2r)j\pi}{k+1}\right)}{2\sin\left(\frac{j\pi}{k+1}\right)}.
			\end{align*}
Hence we can simplify $D_k$ and $E_k$ as follows.  

\begin{align*}
    D_k&=\frac{1}{j}\left(\frac{1}{k}-1\right)\left(\frac{(-1)^{\frac{j+1}{2}}\sin\left(\frac{2t_kj\pi}{k-1}\right)-\cos\left(\rho j\pi+\frac{(2\rho-2u_k+2r-1)j\pi}{k-1}\right)}{\sin\left(\frac{j\pi}{k-1}\right)}\right),\\
				E_k
				&=\frac{1}{j}\left(\frac{1}{k}+1\right)\left(\frac{(-1)^{\frac{j+1}{2}}\sin\left(\frac{(2t_k+1)j\pi}{k+1}\right)-\cos\left(\rho j\pi-\frac{(2u_k-2r-1)j\pi}{k+1}\right)}{\sin\left(\frac{j\pi}{k+1}\right)}\right)\\
				&\qquad-\frac{1}{j}\left(\frac{1}{k}+1\right)\left((-1)^{\frac{j+1}{2}}\cos\left(\frac{2t_kj\pi}{k+1}\right)+\sin\left(\rho j\pi-\frac{(2u_k-2r)j\pi}{k+1}\right)\right).
			\end{align*}
Therefore, using Corollary \ref{cor2.2} and Lemma \ref{lem2.4} to \eqref{DE} on the basis of $D_k$ and $E_k$ we have found, the proof is complete.\hfill $\square$

\hfill

Lastly, we mention here that from Section \ref{sec3} to Section \ref{sec6}, we will prove the results for when $\rho\in\mathbb{Q}$. For the case $\rho\in\mathbb{R}\setminus\mathbb{Q}$, we will discuss it in Section \ref{sec7}.

\section{Proof of results part I}\label{sec3}

Before proving Theorem \ref{thm1.1}, for each $n, m, k\in\mathbb{N}\cup\{0\}$, define
\[f_{n,m,k}(x):=\sin((n-2m)kx)-\sin((n-2m)x),\]
		\[g_{n,m,k}(x):=\cos((n-2m)kx)-\cos((n-2m)x).\]
If $\rho=0$, then Theorem \ref{thm1.1} clearly holds. So we may assume $\rho>0$ and $k\ge (2/\rho)+1$.

Let us briefly outline the main idea of the proof. The technique is standard: we first determine the \(x\)-intercepts of \(s_{n,k}\), which allows us to identify the regions where the function is positive or negative. This, in turn, permits us to remove the absolute value. Then let $k\to\infty$. We can express \(I_{n,\rho}\) as a sum of elementary integrals (which coincide with results in Section \ref{sec2}), together with a remainder term, which tends to \(0\). We now present the details of the proof.\\

\noindent
{\bf Proof of Theorem \ref{thm1.1} ($I_{n, \rho}$, odd $n$ case).} We set $\alpha_k:=\lfloor \rho(k-1)/2\rfloor$.  Note that $\alpha_k\ge 1$. We define the following sets:
\begin{align*}
    M_{\rho}&:=\left\{k\in\mathbb{N} : \alpha_k\in\left(\frac{\rho(k-1)}{2}-1,\frac{\rho(k+1)-1}{2}\right]\right\},\\
		N_{\rho}&:=\left\{k\in\mathbb{N} : \alpha_k\in\left(\frac{\rho(k+1)-1}{2},\frac{\rho(k-1)}{2}\right]\right\}.
\end{align*}
It is clear that $M_{\rho}$ and $N_{\rho}$ partition $\mathbb{N}$. Next, we find the intersections of the curves $\sin^n(kx)$ and $\sin^nx$ over $[0, \rho\pi]$ by solving the equation
\[\sin^n(kx)=\sin^nx.\]
Since $n$ is odd, taking the $n$th root on both sides gives $\sin(kx)=\sin x$. Then the sum-to-product formula implies
\[2\cos\left(\frac{(k+1)x}{2}\right)\sin\left(\frac{(k-1)x}{2}\right)=0.\]
So all the intersection points of these two functions on $[0, \rho\pi]$ are
\[x=\frac{(2a-1)\pi}{k+1}\quad\text{and}\quad x=\frac{2b\pi}{k-1},\]
where $a, b\in\mathbb{Z}$, and
\begin{align*}
    1\le a\le \left\lfloor\frac{\rho(k+1)+1}{2}\right\rfloor,\quad
    0\le b\le\left\lfloor\frac{\rho(k-1)}{2}\right\rfloor=\alpha_k.
\end{align*}
\begin{itemize}
    \item If $k\in M_\rho$, then
		\[\frac{2\alpha_k\pi}{k-1}<\frac{(2\alpha_k+1)\pi}{k+1}\leq\rho\pi<\frac{(2\alpha_k+2)\pi}{k-1}\quad\text{and}\quad\frac{2(l-1)\pi}{k-1}<\frac{(2l-1)\pi}{k+1}<\frac{2l\pi}{k-1}\]
		for $1\leq l\leq\alpha_k$, and
		\[s_{n,k}(x)\begin{cases}
			>0&\text{if }x\in\displaystyle\bigsqcup_{l=1}^{\alpha_k}\left(\dfrac{2(l-1)\pi}{k-1},\dfrac{(2l-1)\pi}{k+1}\right)\sqcup\left(\dfrac{2\alpha_k\pi}{k-1},\frac{(2\alpha_k+1)\pi}{k+1}\right),\\
			&\\
			<0&\text{if }x\in\displaystyle\bigsqcup_{l=1}^{\alpha_k}\left(\dfrac{(2l-1)\pi}{k+1},\dfrac{2l\pi}{k-1}\right)\sqcup\left(\frac{(2\alpha_k+1)\pi}{k+1},\rho\pi\right).
		\end{cases}\]
		That is,
		\begin{align*}
			I_{n,k,\rho}&=\sum_{l=1}^{\alpha_k}\left(\int_{\frac{2(l-1)\pi}{k-1}}^{\frac{(2l-1)\pi}{k+1}}s_{n,k}(x)dx-\int_{\frac{(2l-1)\pi}{k+1}}^{\frac{2l\pi}{k-1}}s_{n,k}(x)dx\right)\\
			&\qquad+\int_{\frac{2\alpha_k\pi}{k-1}}^{\frac{(2\alpha_k+1)\pi}{k+1}}s_{n,k}(x)dx-\int_{\frac{(2\alpha_k+1)\pi}{k+1}}^{\rho\pi}s_{n,k}(x)dx\\
			&=\sum_{l=1}^{\alpha_k+1}\left(\int_{\frac{2(l-1)\pi}{k-1}}^{\frac{(2l-1)\pi}{k+1}}s_{n,k}(x)dx-\int_{\frac{(2l-1)\pi}{k+1}}^{\frac{2l\pi}{k-1}}s_{n,k}(x)dx\right)+\int_{\rho\pi}^{\frac{2(\alpha_k+1)\pi}{k-1}}s_{n,k}(x)dx.
		\end{align*}
    \item If $k\in N_{\rho}$, then
		\[\frac{2\alpha_k\pi}{k-1}\leq\rho\pi<\frac{(2\alpha_k+1)\pi}{k+1},\quad\text{and}\quad\frac{2(l-1)\pi}{k-1}<\frac{(2l-1)\pi}{k+1}<\frac{2l\pi}{k-1}\]
		for $1\leq l\leq\alpha_k$, and
		\[s_{n,k}(x)\begin{cases}
			>0&\text{if }x\in\displaystyle\bigsqcup_{l=1}^{\alpha_k}\left(\dfrac{2(l-1)\pi}{k-1},\dfrac{(2l-1)\pi}{k+1}\right)\sqcup\left(\dfrac{2\alpha_k\pi}{k-1},\rho\pi\right),\\
			&\\
			<0&\text{if }x\in\displaystyle\bigsqcup_{l=1}^{\alpha_k}\left(\dfrac{(2l-1)\pi}{k+1},\dfrac{2l\pi}{k-1}\right).
		\end{cases}\]
        Thus we have
		\[I_{n,k,\rho}=\sum_{l=1}^{\alpha_k}\left(\int_{\frac{2(l-1)\pi}{k-1}}^{\frac{(2l-1)\pi}{k+1}}s_{n,k}(x)dx-\int_{\frac{(2l-1)\pi}{k+1}}^{\frac{2l\pi}{k-1}}s_{n,k}(x)dx\right)+\int_{\frac{2\alpha_k\pi}{k-1}}^{\rho\pi}s_{n,k}(x)dx.\]
\end{itemize}
        By combining the two cases, we can write
		\begin{align}\label{sum}
		    I_{n,k,\rho}=\sum_{l=1}^{\beta_k}\left(\int_{\frac{2(l-1)\pi}{k-1}}^{\frac{(2l-1)\pi}{k+1}}s_{n,k}(x)dx-\int_{\frac{(2l-1)\pi}{k+1}}^{\frac{2l\pi}{k-1}}s_{n,k}(x)dx\right)+\omega_k,
		\end{align}
		where
		\[\beta_k:=\begin{cases}
			\alpha_k&\text{if }k\in N_{\rho},\\
			\alpha_k+1&\text{if }k\in M_{\rho},
		\end{cases}\quad\text{and}\quad\omega_k:=\begin{cases}
		\displaystyle\int_{\frac{2\beta_k\pi}{k-1}}^{\rho\pi}s_{n,k}(x)dx&\text{if }k\in N_{\rho},\\
		&\\
		\displaystyle\int_{\rho\pi}^{\frac{2\beta_k\pi}{k-1}}s_{n,k}(x)dx&\text{if }k\in M_{\rho}.
		\end{cases}\]
        Let us consider the first term on \eqref{sum}. By applying Lemma \ref{lem2.3}, Lemma \ref{lem2.4}, and Lemma \ref{lem2.5} (part 1), we have
        \begin{align*}
			&\sum_{l=1}^{\beta_k}\left(\int_{\frac{2(l-1)\pi}{k-1}}^{\frac{(2l-1)\pi}{k+1}}s_{n,k}(x)dx-\int_{\frac{(2l-1)\pi}{k+1}}^{\frac{2l\pi}{k-1}}s_{n,k}(x)dx\right)\\
			&\qquad=\frac{(-1)^{\frac{n-1}{2}}}{2^{n-1}}\sum_{m=0}^{(n-1)/2}(-1)^m{n\choose m}\sum_{l=1}^{\beta_k}\left(\int_{\frac{2(l-1)\pi}{k-1}}^{\frac{(2l-1)\pi}{k+1}}f_{n,m,k}(x)dx-\int_{\frac{(2l-1)\pi}{k+1}}^{\frac{2l\pi}{k-1}}f_{n,m,k}(x)dx\right)\\
			&\qquad\to\frac{(-1)^{\frac{n-1}{2}}}{2^{n-3}\pi}\sum_{m=0}^{(n-1)/2}(-1)^m{n\choose m}\frac{\sin((n-2m)\rho\pi)}{(n-2m)^2}\\
			&\qquad\qquad+(-1)^{\frac{n+1}{2}}\frac{\rho}{2^{n-2}}\sum_{m=0}^{(n-1)/2}(-1)^m{n\choose m}\frac{\cos((n-2m)\rho\pi)}{n-2m},
		\end{align*}
		as $k\to\infty$ and $k\in M_{\rho},N_{\rho}$, since $n-2m$ is odd for $m\in\mathbb{N}$.
For the term $\omega_k$ in \eqref{sum}, as
		\[\frac{\rho}{2}-\frac{1}{k-1}<\frac{\alpha_k}{k-1}\leq\frac{\rho}{2}\]
		for $k\in\mathbb{N}$, we have $\alpha_k/(k-1)\to\rho/2$ as $k\to\infty$ by the sandwich theorem. Hence $\beta_k/(k-1)\to\rho/2$ as $k\to\infty$.
		This implies 
		\[\lvert\omega_k\rvert\le 2\left\lvert\rho\pi-\frac{2\beta_k\pi}{k-1}\right\rvert\to0,\]
		as $k\to\infty$ and $k\in M_{\rho},N_{\rho}$.
		By Lemma \ref{lem2.4}, $I_{n,\rho}$ exists and equals what we want. \hfill $\square$

\hfill

\noindent
{\bf Proof of Theorem \ref{thm1.1} ($J_{n, \rho}$, odd $n$ case).} Define $\alpha_k=\lfloor \rho(k-1)/2\rfloor\ge 1$. We wish to solve $\cos^nx=\cos^n(kx)$. By taking the $n$th root and using the sum-to-product formula, we have
\begin{align*}
    \sin\left(\frac{(k+1)x}{2}\right)\sin\left(\frac{(k-1)x}{2}\right)=0.
\end{align*}
Its solutions on $[0, \rho\pi]$ are $x=2a\pi/(k+1)$ and $x=2b\pi/(k-1)$,
where $a, b\in\mathbb{Z}$ and
\begin{align*}
    1\le a\le \left\lfloor\frac{\rho(k+1)}{2}\right\rfloor,\quad
    0\le b\le\left\lfloor\frac{\rho(k-1)}{2}\right\rfloor=\alpha_k.
\end{align*}

Now, one can appropriately partition $\mathbb{N}=M_{\rho}\sqcup N_{\rho}$ as in the previous proof. We will omit the  details.
\begin{itemize}
    \item If $k\in M_{\rho}$, then we have
    \begin{align*}
        c_{n, k}(x)\begin{cases}
            >0 &\text{if }\displaystyle x\in\bigsqcup_{l=1}^{\alpha_k}\left(\frac{2l\pi}{k+1}, \frac{2l\pi}{k-1}\right)\sqcup\left(\frac{2(\alpha_k+1)\pi}{k+1}, \rho\pi\right),
            \\
            \\
            <0 &\text{if }\displaystyle x\in\bigsqcup_{l=1}^{\alpha_k+1}\left(\frac{2(l-1)\pi}{k-1}, \frac{2l\pi}{k+1}\right).
        \end{cases}
    \end{align*}
    \item If $k\in N_{\rho}$, then we have
    \begin{align*}
        c_{n, k}(x)\begin{cases}
            >0 &\text{if }\displaystyle x\in\bigsqcup_{l=1}^{\alpha_k}\left(\frac{2l\pi}{k+1}, \frac{2l\pi}{k-1}\right),
            \\
            \\
            <0 &\text{if }\displaystyle x\in\bigsqcup_{l=1}^{\alpha_k}\left(\frac{2(l-1)\pi}{k-1}, \frac{2l\pi}{k+1}\right)\sqcup\left(\frac{2\alpha_k\pi}{k-1}, \rho\pi\right).
            \end{cases}
    \end{align*}
\end{itemize}
Combining these two cases, we deduce
\begin{align}\label{neweq}
    J_{n, k, \rho}&=\sum_{l=1}^{\lambda_k}\left(\int_{\frac{2l\pi}{k+1}}^{\frac{2l\pi}{k-1}}c_{n, k}(x)dx-\int_{\frac{2(l-1)\pi}{k-1}}^{\frac{2l\pi}{k+1}} c_{n, k}(x)dx\right)+\tau_k,
\end{align}
where
\begin{align*}
    \lambda_k:=\begin{cases}
        \alpha_{k}+1 &\text{if } k\in M_{\rho},\\
        \alpha_k &\text{if } k\in N_{\rho},
    \end{cases}\quad\text{and}\quad \tau_{k}:=\begin{cases}
        \displaystyle\int_{\frac{2(\alpha_k+1)\pi}{k-1}}^{\rho\pi}c_{n, k}(x)dx &\text{if } k\in M_{\rho},\\
        \\
        \displaystyle\int_{\rho\pi}^{\frac{2\alpha_k\pi}{k-1}}c_{n, k}(x)dx &\text{if } k\in N_{\rho}.
    \end{cases}
\end{align*}
As usual, one can show that $|\tau_k|\to 0$ as $k\to\infty$ using the sandwich theorem. Now, consider first term in \eqref{neweq} as $k\to\infty$:
\begin{align*}
    &\sum_{l=1}^{\lambda_k}\left(\int_{\frac{2l\pi}{k+1}}^{\frac{2l\pi}{k-1}}c_{n, k}(x)dx-\int_{\frac{2(l-1)\pi}{k-1}}^{\frac{2l\pi}{k+1}} c_{n, k}(x)dx\right)\\
    &\qquad =\frac{1}{2^{n-1}}\sum_{m=0}^{(n-1)/2}\binom{n}{m}\sum_{l=1}^{\lambda_k}\left(\int_{\frac{2l\pi}{k+1}}^{\frac{2l\pi}{k-1}}g_{n, m, k}(x)dx-\int_{\frac{2(l-1)\pi}{k-1}}^{\frac{2l\pi}{k+1}}g_{n, m, k}(x)dx\right).
\end{align*}
By applying Lemma \ref{lem2.4} and Lemma \ref{newlem2.6}, we are done. \hfill $\square$

\hfill

\noindent
{\bf Proof of Theorem \ref{thm1.1} ($I_{n, \rho}$, even $n$ case).} We let $\theta_k=\lfloor\rho(k-1)\rfloor$, and define the following two sets:
\begin{align*}
    U_{\rho}&:=\left\{k\in\mathbb{N} : \theta_k\in(\rho(k-1)-1,\rho(k+1)-1]\right\},\\
    V_{\rho}&:=\left\{k\in\mathbb{N} : \theta_k\in(\rho(k+1)-1,\rho(k-1)]\right\}.
\end{align*}
Then $U_{\rho}$ and $V_{\rho}$ partition $\mathbb{N}$.  By solving the equation
		\[\sin^n(kx)-\sin^nx=0\]
		on $[0,\rho\pi]$, we have
		$\sin(kx)-\sin x=0$ or $\sin(kx)+\sin x=0$.
		That is,
		\[2\sin\left(\frac{(k+1)x}{2}\right)\cos\left(\frac{(k-1)x}{2}\right)=0\quad\text{or}\quad2\sin\left(\frac{(k-1)x}{2}\right)\cos\left(\frac{(k+1)x}{2}\right)=0.\]
        This implies
		\[x=\frac{a\pi}{k+1}\quad\text{and}\quad x=\frac{b\pi}{k-1},\]
        where $a, b\in\mathbb{Z}$,
            $1\le a\le \lfloor\rho(k+1)\rfloor$, and  $0\le b\le \lfloor \rho(k-1)\rfloor=\theta_k$.
        \begin{itemize}
		\item If $k\in U_\rho$, then
		\[\frac{\theta_k\pi}{k-1}<\frac{(\theta_k+1)\pi}{k+1}\leq\rho\pi<\frac{(\theta_k+1)\pi}{k-1}\quad\text{and}\quad\frac{(l-1)\pi}{k-1}<\frac{l\pi}{k+1}<\frac{l\pi}{k-1}\]
		for every $l\in\mathbb{Z}$ with $1\leq l\leq\theta_k$, and
		\[s_{n,k}(x)\begin{cases}
			>0&\text{if }x\in\displaystyle\bigsqcup_{l=1}^{\theta_k}\left(\frac{(l-1)\pi}{k-1},\frac{l\pi}{k+1}\right)\sqcup\left(\frac{\theta_k\pi}{k-1},\frac{(\theta_k+1)\pi}{k+1}\right),\\\\
			<0&\text{if }x\in\displaystyle\bigsqcup_{l=1}^{\theta_k}\left(\frac{l\pi}{k+1},\frac{l\pi}{k-1}\right)\sqcup\left(\frac{(\theta_k+1)\pi}{k+1},\rho\pi\right).
		\end{cases}\]
		That is,
		\begin{align*}
			I_{n,k,\rho}
			&=\sum_{l=1}^{\theta_k+1}\left(\int_{\frac{(l-1)\pi}{k-1}}^{\frac{l\pi}{k+1}}s_{n,k}(x)dx-\int_{\frac{l\pi}{k+1}}^{\frac{l\pi}{k-1}}s_{n,k}(x)dx\right)+\int_{\rho\pi}^{\frac{(\theta_k+1)\pi}{k-1}}s_{n,k}(x)dx.
		\end{align*}
          \item 		If $k\in V_\rho$, then
		\[\frac{\theta_k\pi}{k-1}\leq\rho\pi<\frac{(\theta_k+1)\pi}{k+1}\quad\text{and}\quad\frac{(l-1)\pi}{k-1}<\frac{l\pi}{k+1}<\frac{l\pi}{k-1},\]
		for every $l\in\mathbb{Z}$ with $1\leq l\leq\theta_k$, and
		\[s_{n,k}(x)\begin{cases}
			>0&\text{if }x\in\displaystyle\bigsqcup_{l=1}^{\theta_k}\left(\frac{(l-1)\pi}{k-1},\frac{l\pi}{k+1}\right)\sqcup\left(\frac{\theta_k\pi}{k-1},\rho\pi\right),\\\\
			<0&\text{if }x\in\displaystyle\bigsqcup_{l=1}^{\theta_k}\left(\frac{l\pi}{k+1},\frac{l\pi}{k-1}\right).
		\end{cases}\]
		Hence we obtain
		\[I_{n,k,\rho}=\sum_{l=1}^{\theta_k}\left(\int_{\frac{(l-1)\pi}{k-1}}^{\frac{l\pi}{k+1}}s_{n,k}(x)dx-\int_{\frac{l\pi}{k+1}}^{\frac{l\pi}{k-1}}s_{n,k}(x)dx\right)+\int_{\frac{\theta_k\pi}{k-1}}^{\rho\pi}s_{n,k}(x)dx.\]
        \end{itemize}
    By combining the two cases, we have
		\[I_{n,k,\rho}=\sum_{l=1}^{\gamma_k}\left(\int_{\frac{(l-1)\pi}{k-1}}^{\frac{l\pi}{k+1}}s_{n,k}(x)dx-\int_{\frac{l\pi}{k+1}}^{\frac{l\pi}{k-1}}s_{n,k}(x)dx\right)+\varphi_k,\]
		where
		\[\gamma_k:=\begin{cases}
			\theta_k+1&\text{if }k\in U_\rho,\\
            \theta_k&\text{if }k\in V_\rho,
		\end{cases}\quad\text{and}\quad\varphi_k:=\begin{cases}
		\displaystyle\int_{\rho\pi}^{\frac{(\theta_k+1)\pi}{k-1}}s_{n,k}(x)dx&\text{if }k\in U_\rho,\\\\
        \displaystyle\int_{\frac{\theta_k\pi}{k-1}}^{\rho\pi}s_{n,k}(x)dx&\text{if }k\in V_\rho.
		\end{cases}\]
        Then we proceed the same procedure as in the proof of the odd case. That is, we apply Lemma \ref{lem2.3},  Lemma \ref{lem2.4}, Lemma \ref{lem2.5} (part 2), and the sandwich theorem. Finally, we will yield
        \begin{align*}
			&\sum_{l=1}^{\gamma_k}\left(\int_{\frac{(l-1)\pi}{k-1}}^{\frac{l\pi}{k+1}}s_{n,k}(x)dx-\int_{\frac{l\pi}{k+1}}^{\frac{l\pi}{k-1}}s_{n,k}(x)dx\right)\\
			&\qquad=\frac{(-1)^{\frac{n}{2}}}{2^{n-1}}\sum_{m=0}^{(n-2)/2}(-1)^m{n\choose m}\sum_{l=1}^{\gamma_k}\left(\int_{\frac{(l-1)\pi}{k-1}}^{\frac{l\pi}{k+1}}g_{n,m,k}(x)dx-\int_{\frac{l\pi}{k+1}}^{\frac{l\pi}{k-1}}g_{n,m,k}(x)dx\right)\\
			&\qquad\to\displaystyle\frac{(-1)^{\frac{n-2}{2}}}{2^{n-4}\pi}\sum_{m=0}^{(n-2)/2}(-1)^m\binom{n}{m}\frac{(1-\cos((n-2m)\rho\pi))}{(n-2m)^2}\\
            \displaystyle &\qquad\qquad +(-1)^{n/2}\frac{4\rho-1}{2^{n-1}}\sum_{m=0}^{(n-2)/2}(-1)^m\binom{n}{m}\frac{\sin((n-2m)\rho\pi)}{n-2m}
		\end{align*}
        and $|\varphi_k|\to 0$ as $k\to\infty$. Hence the proof is complete. \hfill $\square$

\hfill

\noindent
{\bf Proof of Theorem \ref{thm1.1} ($J_{n, \rho}$, even $n$ case).}
Define $\theta_k=\lfloor\rho(k-1)\rfloor$. We need to solve $\cos^nx=\cos^n(kx)$, equivalently, $\cos x=\pm\cos(kx)$. By the sum-to-product formula, we have
\begin{align*}
    \sin\left(\frac{(k+1)x}{2}\right)\sin\left(\frac{(k-1)x}{2}\right)=0\quad\text{or}\quad \cos\left(\frac{(k+1)x}{2}\right)\cos\left(\frac{(k-1)x}{2}\right)=0.
\end{align*}
So the solutions are $x=a\pi/(k+1)$ and $x=b\pi/(k-1)$, where $a, b\in\mathbb{Z}$ and
\[0\le a\le \lfloor \rho(k+1)\rfloor\quad\text{and}\quad 0\le b\le \lfloor\rho(k-1)\rfloor=\theta_k.\]
Then we may appropriately (details omitted) partition $\mathbb{N}=U_{\rho}\sqcup V_{\rho}$ as in the previous proof.
\begin{itemize}
    \item If $k\in U_{\rho}$, then
    \begin{align*}
        c_{n, k}(x)\begin{cases}
            >0 &\text{if } \displaystyle x\in\bigsqcup_{l=1}^{\theta_k}\left(\frac{l\pi }{k+1}, \frac{l\pi }{k-1}\right)\sqcup\left(\frac{(\theta_k+1)\pi}{k+1},\rho\pi\right),
            \\
            <0 &\text{if } \displaystyle x\in\bigsqcup_{l=1}^{\theta_k+1}\left(\frac{(l-1)\pi}{k-1}, \frac{l\pi }{k+1}\right).
        \end{cases}
    \end{align*}
    \item If $k\in V_{\rho}$, then
    \begin{align*}
        c_{n, k}(x)\begin{cases}
            >0 &\text{if } \displaystyle x\in\bigsqcup_{l=1}^{\theta_k}\left(\frac{l\pi }{k+1}, \frac{l\pi }{k-1}\right),
            \\
            <0 &\text{if } \displaystyle x\in\bigsqcup_{l=1}^{\theta_k}\left(\frac{(l-1)\pi}{k-1}, \frac{l\pi}{k+1}\right)\sqcup\left(\frac{\theta_k\pi}{k-1}, \rho\pi\right).
            \end{cases}
    \end{align*}
\end{itemize}
Merging both cases, we have
\begin{align}\label{neww}
    J_{n, k, \rho}=\sum_{l=1}^{\xi_k}\left(\int_{\frac{l\pi}{k+1}}^{\frac{l\pi}{k-1}}c_{n, k}(x) dx-\int_{\frac{(l-1)\pi}{k-1}}^{\frac{l\pi}{k+1}}c_{n, k}(x) dx\right)+\mu_k,
\end{align}
where
\begin{align*}
    \xi_k:=\begin{cases}
    \theta_k+1 &\text{if } k\in U_{\rho},
    \\
    \theta_k &\text{if } k\in V_{\rho},
    \end{cases}
    \quad\text{and}\quad
    \mu_k:=\begin{cases}
    \displaystyle\int_{\frac{(\theta_k+1)\pi}{k-1}}^{\rho\pi}c_{n, k}(x)dx &\text{if } k\in U_{\rho},\\\\
    \displaystyle\int_{\rho\pi}^{\frac{\theta_k\pi}{k-1}}c_{n, k}(x)dx &\text{if } k\in V_{\rho}.
    \end{cases}
\end{align*}
As usual, when $k\to\infty$, $|\mu_k|\to 0$. Now, the first term of \eqref{neww} can be written as
\begin{align*}
    &\sum_{l=1}^{\xi_k}\left(\int_{\frac{l\pi}{k+1}}^{\frac{l\pi}{k-1}}c_{n, k}(x) dx-\int_{\frac{(l-1)\pi}{k-1}}^{\frac{l\pi}{k+1}}c_{n, k}(x) dx\right)\\
    &\qquad=\frac{1}{2^{n-1}}\sum_{m=0}^{(n-2)/2}\binom{n}{m}\sum_{l=1}^{\xi_k}\left(\int_{\frac{l\pi}{k+1}}^{\frac{l\pi}{k-1}} g_{n, m, k}(x) dx-\int_{\frac{(l-1)\pi}{k-1}}^{\frac{l\pi}{k+1}} g_{n, m, k}(x) dx\right).
\end{align*}
By applying Lemma \ref{lem2.5} (part 2), the proof is complete. \hfill $\square$

\section{Proof of results part II}\label{sec4}

\noindent
{\bf Proof of Theorem \ref{thm1.2} ($I_{n, \rho}$, odd $n$ case).} For $\theta\in\mathbb{R}$, define 
\begin{align*}
    P_n(\theta)&:=\frac{(-1)^{\frac{n-1}{2}}}{2^{n-3}\pi}\sum_{m=0}^{(n-1)/2}(-1)^m{n\choose m}\frac{\sin((n-2m)\theta)}{(n-2m)^2},\\
	Q_n(\theta)&:=(-1)^{\frac{n+1}{2}}\frac{\rho}{2^{n-2}}\sum_{m=0}^{(n-1)/2}(-1)^m{n\choose m}\frac{\cos((n-2m)\theta)}{n-2m},
\end{align*}
which are obviously smooth. Note that $P_n'(\pi/2)=P_n(0)=Q_n(\pi/2)=0$. Consider $P_n$. Differentiating it with respect to $\theta$ twice and using Lemma \ref{lem2.3}, we have
\[P_n^\prime(\theta)=\frac{(-1)^{\frac{n-1}{2}}}{2^{n-3}\pi}\sum_{m=0}^{(n-1)/2}(-1)^m{n\choose m}\frac{\cos((n-2m)\theta)}{n-2m}\]
and
\[P_n^{\prime\prime}(\theta)=-\frac{(-1)^{\frac{n-1}{2}}}{2^{n-3}\pi}\sum_{m=0}^{(n-1)/2}(-1)^m{n\choose m}\sin((n-2m)\theta)=-\frac{4}{\pi}\sin^n\theta.\]
Applying integration by parts, we see that
\begin{align*}
				P_n^\prime(\theta)&=P_n^\prime(\theta)-P_n^\prime\left(\frac{\pi}{2}\right)=\frac{4}{\pi}\int_\theta^{\frac{\pi}{2}}\sin^nydy\\
				&=\frac{4}{\pi}\left(\frac{1}{n}\sin^{n-1}\theta\cos\theta+\left(\frac{n-1}{n}\right)\int_\theta^{\frac{\pi}{2}}\sin^{n-2} ydy\right)\\
				&=\frac{4}{\pi n}\sin^{n-1}\theta\cos\theta+\left(\frac{n-1}{n}\right)P_{n-2}^\prime(\theta).
			\end{align*}
Hence integrating the equality above over $[0, \theta]$ gives
\begin{align*}
				P_n(\theta)=P_n(\theta)-P_n(0)&=\frac{4}{\pi n}\int_0^\theta\sin^{n-1}y\cos ydy+\left(\frac{n-1}{n}\right)(P_{n-2}(\theta)-P_{n-2}(0))\\
				&=\frac{4}{\pi n^2}\sin^n\theta+\left(\frac{n-1}{n}\right)P_{n-2}(\theta).
			\end{align*}
Next, consider $Q_n$. By differentiating $Q_n$ with respect to $\theta$ and utilising Lemma \ref{lem2.3}, we have
\[Q^\prime_n(\theta)=(-1)^{\frac{n-1}{2}}\frac{\rho}{2^{n-2}}\sum_{m=0}^{(n-1)/2}(-1)^m{n\choose m}\sin((n-2m)\theta)=2\rho\sin^n\theta.\]
Applying integration by parts gives
\begin{align*}
				Q_n(\theta)&=Q_n(\theta)-Q_n\left(\frac{\pi}{2}\right)=2\rho\int_{\frac{\pi}{2}}^\theta\sin^nydy\\
				&=-\frac{2\rho}{n}\sin^{n-1}\theta\cos\theta+\left(\frac{2\rho(n-1)}{n}\right)\int_{\frac{\pi}{2}}^\theta\sin^{n-2}ydy\\
				&=-\frac{2\rho}{n}\sin^{n-1}\theta\cos\theta+\left(\frac{n-1}{n}\right)Q_{n-2}(\theta).
			\end{align*}
By adding up $I_{n,\rho}=P_n(\rho\pi)+Q_n(\rho\pi)$, we are done. \hfill $\square$

\hfill

\noindent
{\bf Proof of Theorem \ref{thm1.2} ($J_{n, \rho}$, odd $n$ case).} For $\theta\in\mathbb{R}$, define
\begin{align*}
    P_n(\theta) &:= \frac{1}{2^{n-3}\pi}\sum_{m=0}^{(n-1)/2}\binom{n}{m}\frac{(1-\cos((n-2m)\theta))}{(n-2m)^2},\\
    Q_n(\theta)&:=-\frac{2\rho-1}{2^{n-1}}\sum_{m=0}^{(n-1)/2}\binom{n}{m}\frac{\sin((n-2m)\theta)}{n-2m}.
\end{align*}
Note that $Q_n'(\pi/2)=Q_n(0)=P_n'(0)=P_n(0)=0$.
Then, by performing some differentiation and integration by parts (details omitted), we yield two recursive formulas below. 
\begin{align*}
    P_n(\theta)&=\frac{4}{\pi n^2}(1-\cos^n\theta)+\left(\frac{n-1}{n}\right)P_{n-2}(\theta),\\
    Q_n(\theta)&=-\frac{(2\rho-1)}{n}\sin\theta\cos^{n-1}\theta+\left(\frac{n-1}{n}\right)Q_{n-2}(\theta).
\end{align*}
By adding up $P_n$ and $Q_n$ evaluated at $\theta=\rho\pi$, we are done. \hfill $\square$

\hfill

\noindent
{\bf Proof of Theorem \ref{thm1.2} ($I_{n, \rho}$, even $n$ case).} Similarly, for $\theta\in\mathbb{R}$, define
\begin{align*}
    H_n(\theta)&:=\frac{(-1)^{\frac{n-2}{2}}}{2^{n-4}\pi}\sum_{m=0}^{(n-2)/2}(-1)^m{n\choose m}\frac{1-\cos((n-2m)\theta)}{(n-2m)^2},\\
		K_n(\theta)&:=(-1)^{\frac{n}{2}}\left(\frac{4\rho-1}{2^{n-1}}\right)\sum_{m=0}^{(n-2)/2}(-1)^m{n\choose m}\frac{\sin((n-2m)\theta)}{n-2m}.
\end{align*}
We observe that $H_n(0)=H_n'(0)=K_n(0)=0$. Then, by performing the same arguments, we obtain two recursive formulas below.
\begin{align*}
    H_n(\theta)&=\frac{8}{\pi n^2}\sin^n\theta+\left(\frac{n-1}{n}\right)H_{n-2}(\theta),\\
    K_n(\theta)&=-\left(\frac{4\rho-1}{n}\right)\sin^{n-1}\theta\cos\theta+\left(\frac{n-1}{n}\right)K_{n-2}(\theta).
\end{align*}
Note here that we use the shortcut
\[\frac{\theta}{2^n}\binom{n}{n/2}=\frac{\theta}{2^n}\cdot\frac{n(n-1)(n-2)!}{(n/2)^2((n/2-1)!)^2}=\frac{\theta}{2^{n-2}}\left(\frac{n-1}{n}\right)\binom{n-2}{(n-2)/2}\]
along the way.
By substituting $\theta=\rho\pi$, and adding up $H_n$, $K_n$, we are done. \hfill $\square$

\hfill

\noindent
{\bf Proof of Theorem \ref{thm1.2} ($J_{n, \rho}$, even $n$ case).} For $\theta\in\mathbb{R}$, define
\begin{align*}
    H_n(\theta)&:=\frac{1}{2^{n-4}\pi}\sum_{m=0}^{(n-2)/2}\binom{n}{m}\frac{1-\cos((n-2m)\theta)}{(n-2m)^2},\\
    K_n(\theta)&:=\frac{1-4\rho}{2^{n-1}}\sum_{m=0}^{(n-2)/2}\binom{n}{m}\frac{\sin((n-2m)\theta)}{n-2m}.
\end{align*}
Observe that $H_n(0)=H_n'(0)=K_n(0)=0$. Using the same techniques, we have
\begin{align*}
    H_n(\theta)&=\frac{8}{\pi n^2}(1-\cos^{n}\theta)+\left(\frac{n-1}{n}\right)H_{n-2}(\theta),\\
    K_n(\theta)&=-\left(\frac{4\rho-1}{n}\right)\sin\theta\cos^{n-1}\theta+\left(\frac{n-1}{n}\right)K_{n-2}(\theta).
\end{align*}
Then by summing up $H_n$ and $K_n$ evaluated at $\theta=\rho\pi$, the proof is complete.\hfill $\square$

\section{Proof of results part III}\label{sec5}

\noindent
{\bf Proof of Theorem \ref{thm1.3} (first equation).} First, if $\rho=1/2$, then we are done. So we consider when $\rho\in(1/2, 1]$. Let $k\in\mathbb{N}$, we make the change of variables $u=\pi-x$. Then by the substitution rule, we have
\begin{align*}
    R_{n, k, \rho} =-\int_{\frac{\pi}{2}}^{(1-\rho)\pi}|s_{n, k}(\pi-u)|du=\int_{(1-\rho)\pi}^{\frac{\pi}{2}}|s_{n, k}(\pi-u)|du.
\end{align*}
If $n$ is even, then using the angle addition formula gives 
\begin{align*}
    |s_{n, k}(\pi-u)|=\lvert\sin^n(\pi-u)-\sin^n(k\pi-ku)\rvert=|s_{n, k}(u)|.
\end{align*}
Hence we yield
\begin{align}\label{E1}
    R_{n,k,\rho}=\int_{(1-\rho)\pi}^{\frac{\pi}{2}}\lvert s_{n,k}(u)\rvert du=I_{n,k,1/2}-I_{n,k,1-\rho}.
\end{align}
Taking the limit $k\to\infty$, we obtain the desired result for this case.

If $n$ and $k$ are odd, then $|s_{n, k}(\pi-u)|=|s_{n, k}(u)|$ as well. In this case, we also acquire \eqref{E1}, and taking $k\to\infty$ gives the result we want. 

Now, we consider when $n$ is odd and $k$ is even. Set $\alpha_k:=\lfloor(\rho(k+1)+1)/2\rfloor$, and
			\[f_{n,m,k}(x):=\sin((n-2m)kx)-\sin((n-2m)x).\]
For $k\geq\lfloor(r+5)/(2\rho-1)\rfloor-1$, where $r\in\{0,2\}$, note that $\alpha_k\ge\lceil (k+3)/4\rceil+1$.
We define two sets below:
			\[A_\rho:=\left\{k\in\mathbb{N} : \alpha_k\in\left(\frac{\rho(k+1)-1}{2},\frac{\rho(k-1)}{2}+1\right]\right\},\]
			\[B_\rho:=\left\{k\in\mathbb{N} : \alpha_k\in\left(\frac{\rho(k-1)}{2}+1,\frac{\rho(k+1)+1}{2}\right]\right\}.\]
Note that $A_{\rho}$ and $B_{\rho}$ partition $\mathbb{N}$. Observe that all the roots of $s_{n,k}(x)=0$ on $[\pi/2,\rho\pi]$ are
			\[x=\frac{(2a-1)\pi}{k+1}\quad\text{and}\quad x=\frac{2b\pi}{k-1},\]
			where $a,b\in\mathbb{Z}$, and
            \begin{align*}
                \left\lceil\frac{k+3}{4}\right\rceil\leq a\leq\left\lfloor\frac{\rho(k+1)+1}{2}\right\rfloor=\alpha_k, \quad \left\lceil\frac{k-1}{4}\right\rceil\leq b\leq\left\lfloor\frac{\rho(k-1)}{2}\right\rfloor.
            \end{align*}
            \begin{itemize}
                \item If $k\in A_\rho$, then
			\begin{align*}
				s_{n,k}(x)\begin{cases}
					>0&\text{if }\displaystyle x\in\bigsqcup_{l=\left\lceil\frac{k+3}{4}\right\rceil}^{\alpha_k}\left(\frac{(2l-1)\pi}{k+1},\frac{2(l-1)\pi}{k-1}\right),\\
					&\\
					<0&\text{if }\displaystyle x\in\left(\frac{\pi}{2},\left(2\left\lceil\frac{k+3}{4}\right\rceil-1\right)\frac{\pi}{k+1}\right)\sqcup\bigsqcup_{l=\left\lceil\frac{k+3}{4}\right\rceil}^{\alpha_k-1}\left(\frac{2(l-1)\pi}{k-1},\frac{(2l+1)\pi}{k+1}\right)\\
					&\qquad\qquad\displaystyle\sqcup\left(\frac{2(\alpha_k-1)\pi}{k-1},\rho\pi\right).
				\end{cases}
			\end{align*}
    \item If $k\in B_\rho$, then
			\begin{align*}
				s_{n,k}(x)\begin{cases}
					>0&\text{if }\displaystyle x\in\bigsqcup_{l=\left\lceil\frac{k+3}{4}\right\rceil}^{\alpha_k-1}\left(\frac{(2l-1)\pi}{k+1},\frac{2(l-1)\pi}{k-1}\right)\sqcup\left(\frac{(2\alpha_k-1)\pi}{k+1},\rho\pi\right),\\\\
					<0&\text{if }\displaystyle x\in\left(\frac{\pi}{2},\left(2\left\lceil\frac{k+3}{4}\right\rceil-1\right)\frac{\pi}{k+1}\right)\sqcup\bigsqcup_{l=\left\lceil\frac{k+3}{4}\right\rceil}^{\alpha_k-1}\left(\frac{2(l-1)\pi}{k-1},\frac{(2l+1)\pi}{k+1}\right).
				\end{cases}
			\end{align*}
    \end{itemize}
    By combining both cases, we have
            \begin{align}
			\begin{split}\label{e7}
				R_{n,k,\rho}&=\sum_{l=1}^{\beta_k}\left(\int_{\left(2l+2\left\lceil\frac{k+3}{4}\right\rceil-3\right)\frac{\pi}{k+1}}^{\left(2l+2\left\lceil\frac{k+3}{4}\right\rceil-4\right)\frac{\pi}{k-1}}s_{n,k}(x)dx-\int_{\left(2l+2\left\lceil\frac{k+3}{4}\right\rceil-4\right)\frac{\pi}{k-1}}^{\left(2l+2\left\lceil\frac{k+3}{4}\right\rceil-1\right)\frac{\pi}{k+1}}s_{n,k}(x)dx\right)\\
				&\qquad+\psi_k-\int_{\frac{\pi}{2}}^{\left(2\left\lceil\frac{k+3}{4}\right\rceil-1\right)\frac{\pi}{k+1}}s_{n,k}(x)dx,
			\end{split}
            \end{align}
			where
			\[\beta_k:=\begin{cases}
				\displaystyle\alpha_k-\left\lceil\frac{k+3}{4}\right\rceil+1&\text{if }k\in A_\rho,\\\\
				\displaystyle\alpha_k-\left\lceil\frac{k+3}{4}\right\rceil&\text{if }k\in B_\rho,
			\end{cases}\quad\text{and}\quad\psi_k:=\begin{cases}
			\displaystyle\int_{\rho\pi}^{\frac{(2\alpha_k+1)\pi}{k+1}}s_{n,k}(x)dx&\text{if }k\in A_\rho,\\
			&\\
			\displaystyle\int_{\frac{(2\alpha_k-1)\pi}{k+1}}^{\rho\pi}s_{n,k}(x)dx&\text{if }k\in B_\rho.
			\end{cases}\]
Then by applying Corollary \ref{cor2.2}, Lemma \ref{lem2.3}, Lemma \ref{lem2.4}, and Lemma \ref{lem2.6} (first equation) to the first term of \eqref{e7}, we yield
\begin{align*}
				&\sum_{l=1}^{\beta_k}\left(\int_{\left(2l+2\left\lceil\frac{k+3}{4}\right\rceil-3\right)\frac{\pi}{k+1}}^{\left(2l+2\left\lceil\frac{k+3}{4}\right\rceil-4\right)\frac{\pi}{k-1}}s_{n,k}(x)dx-\int_{\left(2l+2\left\lceil\frac{k+3}{4}\right\rceil-4\right)\frac{\pi}{k-1}}^{\left(2l+2\left\lceil\frac{k+3}{4}\right\rceil-1\right)\frac{\pi}{k+1}}s_{n,k}(x)dx\right)\\
				&\qquad=\frac{(-1)^{\frac{n-1}{2}}}{2^{n-1}}\sum_{m=0}^{(n-1)/2}(-1)^m{n\choose m}\times\\
				&\qquad\qquad\sum_{l=1}^{\beta_k}\left(\int_{\left(2l+2\left\lceil\frac{k+3}{4}\right\rceil-3\right)\frac{\pi}{k+1}}^{\left(2l+2\left\lceil\frac{k+3}{4}\right\rceil-4\right)\frac{\pi}{k-1}}f_{n,m,k}(x)dx-\int_{\left(2l+2\left\lceil\frac{k+3}{4}\right\rceil-4\right)\frac{\pi}{k-1}}^{\left(2l+2\left\lceil\frac{k+3}{4}\right\rceil-1\right)\frac{\pi}{k+1}}f_{n,m,k}(x)dx\right)\\
				&\qquad\to\frac{(-1)^{\frac{n-1}{2}}}{2^{n-1}}\sum_{m=0}^{(n-1)/2}(-1)^m{n\choose m}\left(\frac{4}{\pi(n-2m)^2}\left((-1)^{\frac{n-2m-1}{2}}-\sin((n-2m)\rho\pi)\right)\right.\\
				&\qquad\qquad-\left.\frac{2(1-\rho)}{n-2m}\cos((n-2m)\rho\pi)\right),
			\end{align*}
			as $k\to\infty$ and $k\in A_\rho,B_\rho$, since $n-2m$ is odd for $m\in\mathbb{N}$. Then one can use the sandwich theorem to show that the second and the third terms on \eqref{e7} tend to $0$ as $k\to\infty$. Thus
            \begin{align*}
				R_{n,\rho}
				&=\frac{1}{2^{n-3}\pi}\sum_{m=0}^{(n-1)/2}{n\choose m}\frac{1}{(n-2m)^2}\\
				&\qquad-\left(\frac{(-1)^{\frac{n-1}{2}}}{2^{n-3}\pi}\sum_{m=0}^{(n-1)/2}(-1)^m{n\choose m}\frac{\sin((n-2m)\rho\pi)}{(n-2m)^2}\right.\\
				&\qquad\qquad+\left.(-1)^{\frac{n-1}{2}}\left(\frac{1-\rho}{2^{n-2}}\right)\sum_{m=0}^{(n-1)/2}(-1)^m\binom{n}{m}\frac{\cos((n-2m)\rho\pi)}{n-2m}\right).
			\end{align*}
            Since
			\[I_{n,1/2}=\frac{1}{2^{n-3}\pi}\sum_{m=0}^{(n-1)/2}{n\choose m}\frac{1}{(n-2m)^2},\]
			 $n$ is odd, and $1-\rho\in[0,1/2]$, we have
			\begin{align*}
				I_{n,1-\rho}
				&=\frac{(-1)^{\frac{n-1}{2}}}{2^{n-3}\pi}\sum_{m=0}^{(n-1)/2}(-1)^m{n\choose m}\frac{\sin((n-2m)\rho\pi)}{(n-2m)^2}\\
				&\qquad+(-1)^{\frac{n-1}{2}}\left(\frac{1-\rho}{2^{n-2}}\right)\sum_{m=0}^{(n-1)/2}(-1)^m{n\choose m}\frac{\cos((n-2m)\rho\pi)}{n-2m}.
			\end{align*}
            Thus we are done. \hfill $\square$

\hfill

\noindent
{\bf Proof of Theorem \ref{thm1.3} (second equation).} Similar to the proof of the first equation, the case $n$ is even, and the case $n, k$ are odd can be justified easily. Thus let us consider when $n$ is odd, and $k$ is even. Set $\beta_k:=\lfloor\rho(k+1)/2\rfloor$, and
\[g_{n,m,k}(x):=\cos((n-2m)kx)-\cos((n-2m)x).\]
For $k\geq\lfloor(7-r)/(2\rho-1)\rfloor-1$, where $r\in\{0,2\}$, we have $\beta_k\ge\lceil (k+1)/4\rceil+1$ (easy to verify).
We define two sets below:
			\[A_\rho:=\left\{k\in\mathbb{N} : \beta_k\in\left(\frac{\rho(k+1)}{2}-1,\frac{\rho(k-1)}{2}\right]\right\},\]
			\[B_\rho:=\left\{k\in\mathbb{N} : \beta_k\in\left(\frac{\rho(k-1)}{2},\frac{\rho(k+1)}{2}\right]\right\}.\]
Then $A_{\rho}$ and $B_{\rho}$ partition $\mathbb{N}$. Observe that all  roots of $c_{n,k}(x)=0$ on $[\pi/2,\rho\pi]$ are
			\[x=\frac{2a\pi}{k+1}\quad\text{and}\quad x=\frac{2b\pi}{k-1},\]
			where $a,b\in\mathbb{Z}$, and
            \begin{align*}
                \left\lceil\frac{k+1}{4}\right\rceil\leq a\leq\left\lfloor\frac{\rho(k+1)}{2}\right\rfloor=\beta_k, \quad \left\lceil\frac{k-1}{4}\right\rceil\leq b\leq\left\lfloor\frac{\rho(k-1)}{2}\right\rfloor.
            \end{align*}
            
            \textit{Scenario 1: $k\equiv0\pmod 4$.}
            \begin{itemize}
                \item If $k\in A_\rho$, then
			\begin{align*}
				c_{n,k}(x)\begin{cases}
					>0&\text{if }\displaystyle x\in\left(\frac{\pi}{2},\frac{k\pi}{2(k-1)}\right)\sqcup\bigsqcup_{l=\frac{k}{4}+1}^{\beta_k}\left(\frac{2l\pi}{k+1},\frac{2l\pi}{k-1}\right),\\
					&\\
					<0&\text{if }\displaystyle x\in\bigsqcup_{l=\frac{k}{4}}^{\beta_k-1}\left(\frac{2l\pi}{k-1},\frac{2(l+1)\pi}{k+1}\right)\sqcup\left(\frac{2\beta_k\pi}{k-1},\rho\pi\right).
				\end{cases}
			\end{align*}
    \item If $k\in B_\rho$, then 
			\begin{align*}
				c_{n,k}(x)\begin{cases}
					>0&\text{if }\displaystyle x\in\left(\frac{\pi}{2},\frac{k\pi}{2(k-1)}\right)\sqcup\bigsqcup_{l=\frac{k}{4}+1}^{\beta_k-1}\left(\frac{2l\pi}{k+1},\frac{2l\pi}{k-1}\right)\sqcup\left(\frac{2\beta_k\pi}{k+1},\rho\pi\right),\\\\
					<0&\text{if }\displaystyle x\in\bigsqcup_{l=\frac{k}{4}}^{\beta_k-1}\left(\frac{2l\pi}{k-1},\frac{2(l+1)\pi}{k+1}\right).
				\end{cases}
			\end{align*}
    \end{itemize}
    By combining both cases, we have
            \begin{align}
			\begin{split}\label{equa3}
				T_{n,k,\rho}&=\sum_{l=0}^{\delta_k}\left(\int_{\left(2l+\frac{k}{2}\right)\frac{\pi}{k+1}}^{\left(2l+\frac{k}{2}\right)\frac{\pi}{k-1}}c_{n,k}(x)dx-\int_{\left(2l+\frac{k}{2}\right)\frac{\pi}{k-1}}^{\left(2l+\frac{k}{2}+2\right)\frac{\pi}{k+1}}c_{n,k}(x)dx\right)\\
				&\qquad+\psi_k-\int_{\frac{k\pi}{2(k+1)}}^{\frac{\pi}{2}}c_{n,k}(x)dx,
			\end{split}
            \end{align}
			where
			\[\delta_k:=\begin{cases}
				\displaystyle\beta_k-\frac{k}{4}&\text{if }k\in A_\rho,\\\\
				\displaystyle\beta_k-\frac{k}{4}-1&\text{if }k\in B_\rho,
			\end{cases}\quad\text{and}\quad\psi_k:=\begin{cases}
			\displaystyle\int_{\rho\pi}^{\frac{2(\beta_k+1)\pi}{k+1}}c_{n,k}(x)dx&\text{if }k\in A_\rho,\\
			&\\
			\displaystyle\int_{\frac{2\beta_k\pi}{k+1}}^{\rho\pi}c_{n,k}(x)dx&\text{if }k\in B_\rho.
			\end{cases}\]
            Note that $\lceil(k+1)/4\rceil=(k/4)+1$. Then by applying Corollary \ref{cor2.2}, Lemma \ref{lem2.3}, Lemma \ref{lem2.4}, and Lemma \ref{lem2.6} (second equation) to the first term of \eqref{equa3}, we yield
\begin{align*}
				&\sum_{l=0}^{\delta_k}\left(\int_{\left(2l+\frac{k}{2}\right)\frac{\pi}{k+1}}^{\left(2l+\frac{k}{2}\right)\frac{\pi}{k-1}}c_{n,k}(x)dx-\int_{\left(2l+\frac{k}{2}\right)\frac{\pi}{k-1}}^{\left(2l+\frac{k}{2}+2\right)\frac{\pi}{k+1}}c_{n,k}(x)dx\right)\\
				&\qquad=\frac{1}{2^{n-1}}\sum_{m=0}^{(n-1)/2}{n\choose m}\times\sum_{l=0}^{\delta_k}\left(\int_{\left(2l+\frac{k}{2}\right)\frac{\pi}{k+1}}^{\left(2l+\frac{k}{2}\right)\frac{\pi}{k-1}}g_{n,m,k}(x)dx-\int_{\left(2l+\frac{k}{2}\right)\frac{\pi}{k-1}}^{\left(2l+\frac{k}{2}+2\right)\frac{\pi}{k+1}}g_{n,m,k}(x)dx\right)\\
				&\qquad\to\frac{1}{2^{n-1}}\sum_{m=0}^{(n-1)/2}{n\choose m}\left(-\frac{4}{\pi(n-2m)^2}\cos((n-2m)\rho\pi)+\frac{(1-2\rho)}{n-2m}\sin((n-2m)\rho\pi)\right)
			\end{align*}
			as $k\to\infty$ and $k\in A_\rho,B_\rho$, since $n-2m$ is odd for $m\in\mathbb{N}$. Then one can use the sandwich theorem to show that the second and the third terms on \eqref{equa3} tend to $0$ as $k\to\infty$. Thus
            \begin{align*}
				T_{n,\rho}
				&=-\frac{1}{2^{n-3}\pi}\sum_{m=0}^{(n-1)/2}{n\choose m}\frac{\cos((n-2m)\rho\pi)}{(n-2m)^2}+\frac{(1-2\rho)}{2^{n-1}}\sum_{m=0}^{(n-1)/2}{n\choose m}\frac{\sin((n-2m)\rho\pi)}{n-2m}.
			\end{align*}
            By knowing the value of $J_{n, 1/2}$,
			 $n$ is odd, and $1-\rho\in[0,1/2]$, we have
			\begin{align*}
				J_{n,1-\rho}
				&=\frac{1}{2^{n-3}\pi}\sum_{m=0}^{(n-1)/2}{n\choose m}\frac{(1+\cos((n-2m)\rho\pi))}{(n-2m)^2}\\
                &\qquad-\frac{(1-2\rho)}{2^{n-1}}\sum_{m=0}^{(n-1)/2}{n\choose m}\frac{\sin((n-2m)\rho\pi)}{n-2m}.
			\end{align*}
            
            \textit{Scenario 2: $k\equiv2\pmod 4$}.
            \begin{itemize}
                \item If $k\in A_\rho$, then
			\begin{align*}
				c_{n,k}(x)\begin{cases}
					>0&\text{if }\displaystyle x\in\bigsqcup_{l=\frac{k+2}{4}}^{\beta_k}\left(\frac{2l\pi}{k+1},\frac{2l\pi}{k-1}\right),\\
					&\\
					<0&\text{if }\displaystyle x\in\left(\frac{\pi}{2},\frac{(k+2)\pi}{2(k+1)}\right)\sqcup\bigsqcup_{l=\frac{k+2}{4}}^{\beta_k-1}\left(\frac{2l\pi}{k-1},\frac{2(l+1)\pi}{k+1}\right)\sqcup\left(\frac{2\beta_k\pi}{k-1},\rho\pi\right).
				\end{cases}
			\end{align*}
    \item If $k\in B_\rho$, then
			\begin{align*}
				c_{n,k}(x)\begin{cases}
					>0&\text{if }\displaystyle x\in\bigsqcup_{l=\frac{k+2}{4}}^{\beta_k-1}\left(\frac{2l\pi}{k+1},\frac{2l\pi}{k-1}\right)\sqcup\left(\frac{2\beta_k\pi}{k+1},\rho\pi\right),\\\\
					<0&\text{if }\displaystyle x\in\left(\frac{\pi}{2},\frac{(k+2)\pi}{2(k+1)}\right)\sqcup\bigsqcup_{l=\frac{k+2}{4}}^{\beta_k-1}\left(\frac{2l\pi}{k-1},\frac{2(l+1)\pi}{k+1}\right).
				\end{cases}
			\end{align*}
    \end{itemize}
    Combining both cases, we have
            \begin{align}
			\begin{split}\label{e3}
				T_{n,k,\rho}&=\sum_{l=1}^{\delta_k}\left(\int_{\left(2l+\frac{k+2}{2}-2\right)\frac{\pi}{k+1}}^{\left(2l+\frac{k+2}{2}-2\right)\frac{\pi}{k-1}}c_{n,k}(x)dx-\int_{\left(2l+\frac{k+2}{2}-2\right)\frac{\pi}{k-1}}^{\left(2l+\frac{k+2}{2}\right)\frac{\pi}{k+1}}c_{n,k}(x)dx\right)\\
				&\qquad+\psi_k-\int_{\frac{\pi}{2}}^{\frac{(k+2)\pi}{2(k+1)}}c_{n,k}(x)dx,
			\end{split}
            \end{align}
			where
			\[\delta_k:=\begin{cases}
				\displaystyle\beta_k-\frac{k+2}{4}+1&\text{if }k\in A_\rho,\\\\
				\displaystyle\beta_k-\frac{k+2}{4}&\text{if }k\in B_\rho,
			\end{cases}\quad\text{and}\quad\psi_k:=\begin{cases}
			\displaystyle\int_{\rho\pi}^{\frac{2(\beta_k+1)\pi}{k+1}}c_{n,k}(x)dx&\text{if }k\in A_\rho,\\
			&\\
			\displaystyle\int_{\frac{2\beta_k\pi}{k+1}}^{\rho\pi}c_{n,k}(x)dx&\text{if }k\in B_\rho.
			\end{cases}\]
Note that $\lceil(k+1)/4\rceil=(k+2)/4$. Then by 
repeating the exact same argument, we will obtain the same result as in Scenario 1. \hfill $\square$

\begin{remark}
    It is quite astonishing that the two equations turn out to look the same, although we currently cannot find a direct implication between these two.
\end{remark}

\noindent
{\bf Proof of Corollary \ref{cor1.4}.} 
We will only prove the formulas for $I_{n, \rho}$ when $n$ is odd, since other parts can be done similarly.
First, by using Theorem \ref{thm1.3},  we have
\begin{align}\label{nice}
    I_{n, \rho}=I_{n, 1/2}+R_{n, \rho}=2I_{n, 1/2}-I_{n, 1-\rho}
\end{align}
for $n\in\mathbb{N}$. Next, by Theorem \ref{thm1.2}, we have
\begin{align*}
I_{n, 1/2}=\left(\frac{n-1}{n}\right)I_{n-2, 1/2}+\frac{4}{\pi n^2}.
\end{align*}
We also see that $1-\rho\in[0, 1/2]$, and so
\begin{align*}
    I_{n,1-\rho}&=\left(\frac{n-1}{n}\right)I_{n-2,1-\rho}-\frac{2(1-\rho)}{n}\sin^{n-1}((1-\rho)\pi)\cos((1-\rho)\pi)+\displaystyle\frac{4}{\pi n^2}\sin^n((1-\rho)\pi).
\end{align*}
Finally, viewing \eqref{nice} with $n\mapsto n-2$, and performing some algebraic manipulations, the proof is complete. \hfill $\square$

\subsection{Special case $I_{n, 1}=J_{n, 1}$}

We end this section with an interesting phenomenon.


\begin{corollary}
		For $n\in\mathbb{N}$, we have $I_{n, 1}=J_{n, 1}$. 
	\end{corollary}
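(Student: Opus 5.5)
Setting $\rho=1$ in \eqref{nice} and in its cosine analogue reduces the claim to an identity at $\rho=1/2$. By Theorem \ref{thm1.3} we have $I_{n,1}=I_{n,1/2}+R_{n,1}=2I_{n,1/2}-I_{n,0}$, and $I_{n,0}=0$. In the same way, $J_{n,1}=J_{n,1/2}+T_{n,1}=2J_{n,1/2}-J_{n,0}=2J_{n,1/2}$. So it suffices to prove $I_{n,1/2}=J_{n,1/2}$ for every $n\in\mathbb{N}$. We do this by evaluating both sides with Theorem \ref{thm1.1} at $\rho=1/2$, splitting according to the parity of $n$.

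\emph{Odd $n$.} The second sum in $J_{n,1/2}$ carries the factor $2\rho-1=0$, so
\[J_{n,1/2}=\frac{1}{2^{n-3}\pi}\sum_{m=0}^{(n-1)/2}\binom{n}{m}\frac{1-\cos((n-2m)\pi/2)}{(n-2m)^2}.\]
Since $n-2m$ is odd, $\cos((n-2m)\pi/2)=0$, and this equals $\frac{1}{2^{n-3}\pi}\sum_m\binom{n}{m}\frac{1}{(n-2m)^2}$. In $I_{n,1/2}$ the cosine sum vanishes for the same reason. In the sine sum we use
\[\sin((n-2m)\pi/2)=(-1)^{\frac{n-2m-1}{2}}=(-1)^{\frac{n-1}{2}}(-1)^m.\]
Combined with the prefactor $(-1)^{\frac{n-1}{2}}(-1)^m$, every sign disappears, and $I_{n,1/2}$ reduces to the same expression. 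This value also matches the formula for $I_{n,1/2}$ already quoted in the proof of Theorem \ref{thm1.3}.

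\emph{Even $n$.} Here $4\rho-1=1\neq0$, so the sine sums do not drop out by their coefficient. However, $n-2m$ is even, so $\sin((n-2m)\pi/2)=0$ for every $m$ and those sums vanish anyway. It remains to compare the first sums. We have $\cos((n-2m)\pi/2)=(-1)^{n/2-m}$, hence $1-\cos((n-2m)\pi/2)$ equals $2$ when $n/2-m$ is odd and $0$ otherwise. Thus both sums run only over $m$ with $n/2-m$ odd. For such $m$ the sign factor in $I_{n,1/2}$ is $(-1)^{\frac{n-2}{2}}(-1)^m=(-1)^{n/2-m-1}=1$. The two first sums therefore agree term by term, which gives $I_{n,1/2}=J_{n,1/2}$.

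The main point of care is the sign bookkeeping: making sure that the prefactors $(-1)^{\frac{n-1}{2}}$ and $(-1)^{\frac{n-2}{2}}$ in Theorem \ref{thm1.1} exactly cancel the signs produced by evaluating sine and cosine at half-integer multiples of $\pi$, on the surviving terms. Once that check is done, the corollary follows from Theorem \ref{thm1.3} via \eqref{nice}, together with $I_{n,0}=J_{n,0}=0$.
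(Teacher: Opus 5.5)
Your proof is correct, but it takes a different route from the paper's. The paper identifies $J_{n,1}$ with the Dombrowski--Dresden constant $\mathcal{A}_n$ from the introduction, which is an external result. It then uses \eqref{nice} to write $I_{n,1}=2I_{n,1/2}$ and evaluates this with Theorem~\ref{thm1.1}. For even $n$ it splits into $n\equiv 2$ and $n\equiv 0\pmod 4$ to rewrite the sum in the two closed forms of $\mathcal{A}_n$.

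You instead apply the cosine half of Theorem~\ref{thm1.3} to get $J_{n,1}=2J_{n,1/2}$ as well. Everything then reduces to $I_{n,1/2}=J_{n,1/2}$, which you check term by term. Your sign bookkeeping is right:
\begin{itemize}
\item for odd $n$, $\sin((n-2m)\pi/2)=(-1)^{\frac{n-1}{2}}(-1)^m$;
\item for even $n$, $(-1)^{\frac{n-2}{2}}(-1)^m=1$ on the surviving terms, namely those with $n/2-m$ odd.
\end{itemize}

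What your version gains:
\begin{itemize}
\item it is self-contained, with no appeal to the published value of $\mathcal{A}_n$;
\item it avoids the mod-$4$ split;
\item it covers $n=1,2$ uniformly;
\item it yields the stronger intermediate identity $I_{n,1/2}=J_{n,1/2}$.
\end{itemize}
The price is that it depends on the second equation of Theorem~\ref{thm1.3}, which the paper's argument never uses. The paper's argument, for its part, shows explicitly that its $\rho=1$ formula reproduces the closed forms of $\mathcal{A}_n$ in the literature.

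Incidentally, $I_{n,1/2}=J_{n,1/2}$ needs no explicit formulas at all. For $k\equiv 1\pmod 4$, the substitution $x\mapsto \pi/2-x$ maps $[0,\pi/2]$ to itself and sends $\cos(kx)$ to $\sin(kx)$ and $\cos x$ to $\sin x$. Hence $J_{n,k,1/2}=I_{n,k,1/2}$ exactly along that subsequence, and both limits are already known to exist.
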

\noindent
{\bf Proof.} Note that the value of $J_{n, 1}$ is $\mathcal{A}_n$ in the introduction. From Corollary \ref{cor1.4}, $I_{n, 1}=2I_{n, 1/2}$ (the cases $n=1,2$ are also trivially true). Next, by Theorem \ref{thm1.1}, we have
\[I_{n,1}=\begin{cases}
				\displaystyle\frac{1}{2^{n-4}\pi}\sum_{m=0}^{(n-1)/2}{n\choose m}\frac{1}{(n-2m)^2}&\text{if $n$ is odd,}\\
				&\\
				\displaystyle\frac{(-1)^{\frac{n-2}{2}}}{2^{n-5}\pi}\sum_{m=0}^{(n-2)/2}(-1)^m{n\choose m}\frac{(1-(-1)^{\frac{n-2m}{2}})}{(n-2m)^2}&\text{if $n$ is even.}
			\end{cases}\]
If $n\equiv 2\pmod{4}$, then $(n-2m)/2\equiv 1-m\pmod{2}$. So
\begin{align*}
    (-1)^{\frac{n-2m}{2}}=\begin{cases}
        1 &\text{if $m$ is odd,}\\
        -1 &\text{if $m$ is even.}
    \end{cases}
\end{align*}
This gives
\[I_{n,1}=\frac{1}{2^{n-6}\pi}\sum_{\substack{m=0\\m\text{ is even}}}^{(n-2)/2}{n\choose m}\frac{1}{(n-2m)^2}=\frac{1}{2^{n-4}\pi}\sum_{m=0}^{(n-2)/4}{n\choose2m}\frac{1}{(n/2-2m)^2}.\]
On the other hand, if $n\equiv 0\pmod{4}$, then $(n-2m)/2\equiv -m\pmod{2}$. So $(-1)^{\frac{n-2m}{2}}$ equals $1$ when $m$ is even, and equals $-1$ when $m$ is odd.
Thus we deduce
\begin{align*}
				I_{n,1}&=-\frac{1}{2^{n-6}\pi}\sum_{\substack{m=1\\m\text{ is odd}}}^{(n-2)/2}(-1){n\choose m}\frac{1}{(n-2m)^2}
                =\frac{1}{2^{n-6}\pi}\sum_{m=0}^{(n-4)/4}{n\choose2m+1}\frac{1}{(n-2(2m+1))^2}.
			\end{align*}
Hence we are done. \hfill $\square$


\section{Proof of results part IV}\label{sec6}

\noindent
{\bf Proof of Theorem \ref{thm1.5}.} Verifying the formulas for $I_{n, \rho}$ and $J_{n, \rho}$ is essentially the same.
First, we claim that for any $m\in\mathbb{N}$ and $y\in\mathbb{R}$,
\[\int_{m\pi}^{y\pi}\lvert s_{n,k}(x)\rvert dx=\begin{cases}
			\displaystyle\int_0^{(y-m)\pi}\lvert s_{n,k}(x)\rvert dx&\text{if $m$ is even},\\
			&\\
			\displaystyle\int_{(m+1-y)\pi}^\pi\lvert s_{n,k}(x)\rvert dx&\text{if $m$ is odd}.
		\end{cases}\]
Indeed, if $m$ is even, then we make the change of variables $t=x-m\pi$. This gives
\[\int_{m\pi}^{y\pi}\lvert s_{n,k}(x)\rvert dx=\int_0^{(y-m)\pi}\lvert s_{n,k}(m\pi+t)\rvert dt=\int_0^{(y-m)\pi}\lvert s_{n,k}(t)\rvert dt.\]
However, if $m$ is odd we make the change of variables $t=(m+1)\pi-x$. This implies
\[\int_{m\pi}^{y\pi}\lvert s_{n,k}(x)\rvert dx=-\int_{\pi}^{(m+1-y)\pi}\lvert s_{n,k}(t)\rvert dt=\int_{(m+1-y)\pi}^{\pi}\lvert s_{n,k}(t)\rvert dt.\]
Now let us prove the theorem. If $\rho\in[0, 1)$, then $\rho=\{\rho\}$ and $\lfloor\rho\rfloor=0$ and the result follows from previous developments. Now, if $\rho\ge 1$, then for each $k\in\mathbb{N}$, we have
\begin{align}\label{e1}
    I_{n, k, \rho}=\sum_{m=0}^{\lfloor \rho\rfloor-1}\int_{m\pi}^{(m+1)\pi}|s_{n, k}(x)|dx+\int_{\lfloor \rho\rfloor\pi}^{\rho\pi}|s_{n, k}(x)|dx.
\end{align}
For the first term of \eqref{e1}, using our earlier claim, we have
\[\int_{m\pi}^{(m+1)\pi}\lvert s_{n,k}(x)\rvert dx=\int_0^\pi\lvert s_{n,k}(x)\rvert dx=I_{n,k,1}\]
		for every $m\in\{0,\ldots,\lfloor\rho\rfloor\}$. Hence
        \[\sum_{m=0}^{\lfloor \rho\rfloor-1}\int_{m\pi}^{(m+1)\pi}\lvert s_{n,k}(x)\rvert dx=\lfloor\rho\rfloor I_{n,k,1}.\]
For the second term of \eqref{e1}, using our claim again, we obtain
\begin{align*}
			\int_{\lfloor\rho\rfloor\pi}^{\rho\pi}\lvert s_{n,k}(x)\rvert dx&=\begin{cases}
			\displaystyle\int_0^{(\rho-\lfloor\rho\rfloor)\pi=\{\rho\}\pi}\lvert s_{n,k}(x)\rvert dx&\text{if $\lfloor\rho\rfloor$ is even},\\
			&\\
			\displaystyle\int_{(\lfloor\rho\rfloor+1-\rho)\pi=(1-\{\rho\})\pi}^\pi\lvert s_{n,k}(x)\rvert dx&\text{if $\lfloor\rho\rfloor$ is odd}.		\end{cases}\\
			&=\begin{cases}
				I_{n,k,\{\rho\}}&\text{if $\lfloor\rho\rfloor$ is even},\\\\
				I_{n,k,1}-I_{n,k,1-\{\rho\}}&\text{if $\lfloor\rho\rfloor$ is odd}.
			\end{cases}
		\end{align*}
        This implies
		\[I_{n,k,\rho}=\begin{cases}
			\lfloor\rho\rfloor I_{n,k,1}+I_{n,k,\{\rho\}}&\text{if $\lfloor\rho\rfloor$ is even},\\\\
			(\lfloor\rho\rfloor+1)I_{n,k,1}-I_{n,k,1-\{\rho\}}&\text{if $\lfloor\rho\rfloor$ is odd}.
			\end{cases}\]
		for every $k,n\in\mathbb{N}$. Finally, by taking the limit $k\to\infty$, we are done. \hfill $\square$

\section{From rational to irrational $\rho$}\label{sec7}

So far, we have established our formulas only for rational values of $\rho$. To extend these results to irrational $\rho$, we will use the following standard result in analysis.

\begin{lemma}[Moore-Osgood theorem, \cite{EZ}]
Let $\{a_{j, k}\}_{(j, k)\in\mathbb{N}^2}$ be a double sequence of real numbers. Suppose that
$\lim_{j\to\infty} a_{j,k} = b_k$
uniformly in $k$, and that
$\lim_{k\to\infty} a_{j,k} = c_j$
for all sufficiently large $j$. Then both $\lim_{k\to\infty} b_k$ and $\lim_{j\to\infty} c_j$ exist and are equal to the iterated limits, i.e.,
\[
\lim_{j, k\to\infty}a_{j, k}=
\lim_{k\to\infty}\lim_{j\to\infty} a_{j,k}
=
\lim_{j\to\infty}\lim_{k\to\infty} a_{j, k}.
\]
\end{lemma}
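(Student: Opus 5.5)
The plan is the classical $\varepsilon/3$ argument, organised around the Cauchy criterion. Write $L$ for the common value we want to produce. The first step is to show that $\{c_j\}$ is Cauchy. Fix $\varepsilon>0$. By the uniform convergence $a_{j,k}\to b_k$, there is $J$ such that $|a_{j,k}-b_k|<\varepsilon$ for all $j\ge J$ and \emph{all} $k$. Hence for $j,j'\ge J$ we have $|a_{j,k}-a_{j',k}|<2\varepsilon$ for every $k$. Enlarge $J$ if necessary so that $\lim_{k\to\infty}a_{j,k}=c_j$ exists for all $j\ge J$. Letting $k\to\infty$ in the previous inequality then gives $|c_j-c_{j'}|\le 2\varepsilon$. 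So $\{c_j\}$ is Cauchy in $\mathbb{R}$ and converges to some $L$; that is, $\lim_{j\to\infty}\lim_{k\to\infty}a_{j,k}=L$.

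The second step is to show that $b_k\to L$. For any $j$ and $k$ we split
\[
|b_k-L|\le |b_k-a_{j,k}|+|a_{j,k}-c_j|+|c_j-L|.
\]
The order of choices is the one delicate point. First choose a single $j\ge J$ large enough that $|c_j-L|<\varepsilon$; the uniform bound then gives $|b_k-a_{j,k}|<\varepsilon$ for every $k$. With this $j$ now fixed, choose $K$ so that $|a_{j,k}-c_j|<\varepsilon$ for all $k\ge K$. Then $|b_k-L|<3\varepsilon$ for $k\ge K$, so $\lim_{k\to\infty}\lim_{j\to\infty}a_{j,k}=\lim_k b_k=L$.

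The third step handles the double limit. For $j\ge J$ and $k\ge K'$, where $K'$ is chosen so that $|b_k-L|<\varepsilon$ for $k\ge K'$, we get
\[
|a_{j,k}-L|\le|a_{j,k}-b_k|+|b_k-L|<2\varepsilon .
\]
Hence $\lim_{j,k\to\infty}a_{j,k}=L$, and the theorem follows.

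The main obstacle I expect is bookkeeping rather than any real difficulty. The uniformity in $k$ must be used so that the estimate survives the passage $k\to\infty$ in the first step. In the second step, $j$ must be frozen before $k$ is sent to infinity, because the convergence $a_{j,k}\to c_j$ is not assumed to be uniform in $j$.
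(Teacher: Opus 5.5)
Your proof is correct and complete. The Cauchy step uses the uniformity in $k$ exactly where it is needed: you pass $k\to\infty$ in $|a_{j,k}-a_{j',k}|<2\varepsilon$ after enlarging $J$ so that both $c_j$ and $c_{j'}$ exist. The second step freezes $j$ before sending $k\to\infty$. The third step yields the double limit with threshold $\max(J,K')$.

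The paper gives no proof of this statement; it simply quotes the Moore--Osgood theorem from the literature, so there is no argument to compare against. Yours is the standard $\varepsilon/3$ proof, and including it would make Section 7 self-contained.
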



We now justify why the formulas remain valid for irrational $\rho$. Fix an irrational $\rho\in(0, 1/2)$, and choose a sequence $\{\rho_j\}_{j\ge1}\subset\mathbb{Q}$ such that $\rho_j\to\rho$ as $j\to\infty$. For each $j,k\in\mathbb{N}$, define
\[
a_{j,k} := \int_0^{\rho_j\pi} \lvert s_{n,k}(x)\rvert dx.
\]
For each fixed $k$, we estimate
\[
\left|
\int_0^{\rho\pi}\lvert s_{n,k}(x)\rvert dx - a_{j,k}
\right|
=
\left|
\int_{\rho_j\pi}^{\rho\pi}\lvert s_{n,k}(x)\rvert dx
\right|
\le
2\pi\,\lvert \rho-\rho_j\rvert,
\]
since $\lvert s_{n,k}(x)\rvert\le 2$ for all $x$. Hence
\[
\lim_{j\to\infty} a_{j,k}
=
\int_0^{\rho\pi}\lvert s_{n,k}(x)\rvert dx
= I_{n,k,\rho}
\]
and the convergence in $j$ is uniform in $k$.
On the other hand, for each fixed $j$, our previous work for rational $\rho$ shows that
\[
\lim_{k\to\infty} a_{j,k}
=
\lim_{k\to\infty}\int_0^{\rho_j\pi}\lvert s_{n,k}(x)\rvert dx
=
I_{n,\rho_j},
\]
where $I_{n,\rho_j}$ is given explicitly by Theorem \ref{thm1.1}.
Therefore, by the Moore-Osgood theorem, 
$
\lim_{j\to\infty}\lim_{k\to\infty} a_{j,k}
=
\lim_{k\to\infty}\lim_{j\to\infty} a_{j,k}.
$
In particular,
\[
\lim_{j\to\infty}I_{n, \rho_j}=\lim_{j\to\infty}\lim_{k\to\infty}
\int_0^{\rho_j\pi}\lvert s_{n,k}(x)\rvert dx
=
\lim_{k\to\infty}\int_0^{\rho\pi}\lvert s_{n,k}(x)\rvert dx
=
I_{n,\rho},
\]
and the left-hand side is exactly the explicit expression for $I_{n,\rho}$ obtained in Theorem \ref{thm1.1}, now evaluated at the limit $\rho$ (by continuity). The same argument applies when $\lvert s_{n,k}\rvert$ is replaced by $\lvert c_{n,k}\rvert$, so all the formulas for $J_{n,\rho}$ extend from rational to irrational $\rho$ as well.

\section*{Acknowledgements}

First, we would like to thank the referees for their constructive comments, which significantly enhance the shape of this paper. 
We also give a million thanks to Prof. Ratinan Boonklurb for pointing out some gaps in the original draft. Last but not least, we would like to thank our families for the continuous support. The  first author is funded by Kamnoetvidya Science Academy, while the second author is funded by the Philip K. H. Wong Foundations Scholarship at the University of Hong Kong.

\section*{Conflict of interest}

The authors declare no conflict of interest.

\end{document}